\numberwithin{equation}{section}
\newtheorem{theorem}{Theorem}[section]
\newtheorem{proposition}[theorem]{Proposition}
\newtheorem{lemma}[theorem]{Lemma}
\newtheorem{corollary}[theorem]{Corollary}
\theoremstyle{definition}
\theoremstyle{remark}
\newcommand{\R}{\mathbb{R}}
\newcommand{\N}{\mathbb{N}}
\newcommand{\Z}{\mathbb{Z}}
\renewcommand{\hat}{\widehat}
\newcommand{\eps}{\varepsilon}
\newcommand{\scriptD}{\mathcal{D}}
\newcommand{\scriptE}{\mathcal{E}}
\newcommand{\scriptL}{\mathcal{L}}
\newcommand{\scriptR}{\mathcal{R}}
\newcommand{\scriptS}{\mathcal{S}}
\newcommand{\qtq}[1]{\quad\text{#1}\quad}
\DeclareMathOperator*{\wklim}{wk-lim}
\DeclareMathOperator*{\supp}{supp}
\DeclareMathOperator*{\dist}{dist}
                    \setlist[enumerate, 1]{1\textsuperscript{o}}
\begin{document}
\title{Extremizability of Fourier restriction to the paraboloid}
\author{Betsy Stovall}
\email{stovall@math.wisc.edu}
\address{480 Lincoln Drive, Madison, WI 53706}
\keywords{Fourier restriction, Fourier extension, extremizer, maximizer, profile decomposition}
\begin{abstract}
In this article, we prove that nearly all valid, scale-invariant Fourier restriction inequalities for the paraboloid in $\R^{1+d}$ have extremizers and that $L^p$-normalized extremizing sequences are precompact modulo symmetries.  This result had previously been established for the case $q=2$.  In the range where the boundedness of the restriction operator is still an open question, our result is conditional on improvements toward the restriction conjecture.  
\end{abstract}

\maketitle

\section{Introduction}

There has been substantial recent attention paid to the problem of determining whether equality is possible for various inequalities in harmonic analysis.  In the case of Fourier restriction inequalities, essentially all of the known results are for $L^2$-based Fourier restriction, wherein it is possible to use Hilbert space techniques and Plancherel; an excellent survey of recent results in this vein may be found in \cite{FoschiSilva}.  An exception is a result of Christ--Quilodr\'an \cite{ChristQuilodran} stating that Gaussian functions are not maximizers for Fourier restriction to the paraboloid except in the $L^1$ case and possibly in the Stein--Tomas--Strichartz case.  The result of \cite{ChristQuilodran}, however, leaves open the question of whether maximizers actually exist for the intermediate Lebesgue space bounds.  The purpose of this article is to establish the existence of extremizers and precompactness of extremizing sequences for all valid, nonendpoint, $L^p$ to $L^q$ restriction inequalities for the paraboloid, including, conditionally, the conjectural ones.  We note that the existence of a second endpoint restriction inequality, i.e.\ other than trivial one at $L^1$, would be rather a surprise to the harmonic analysis community, and thus it is expected that our result is sharp.  

We start with a quick recap of the current state of the restriction problem, which will give us an opportunity to define the notation and terminology needed to state our results.

In the late 1960s, Stein conjectured that the restriction operator
$$
\scriptR f(\xi) := \hat f(|\xi|^2,\xi)
$$
extends as a bounded operator from  $L^p(\R^{1+d})$ into $L^q(\R^d)$ for all pairs $(p,q)$ satisfying
\begin{equation} \label{E:restriction admissible} 
p=\frac{d q'}{d+2} \qtq{and} q > p.
\end{equation}
An equivalent formulation is that the extension operator
$$
\scriptE f(t,x) = \int_{\R^d} e^{i(t,x)(|\xi|^2,\xi)}f(\xi)\, d\xi,
$$
extends as a bounded operator from $L^p(\R^d)$ to $L^q(\R^{1+d})$ for all pairs $(p,q)$ satisfying
\begin{equation} \label{E:extension admissible}
 q=\tfrac{d+2}d p' \qtq{and} q > p.  
 \end{equation}
Conditions \eqref{E:restriction admissible} and \eqref{E:extension admissible} are known to be necessary for boundedness of $\scriptR$ and $\scriptE$, respectively.  As of this writing, the above described restriction/extension conjecture is settled when $d=1$, by Stein and Fefferman, and is open in all higher dimensions.  More precisely, in higher dimensions, it is solved for $q > 3.25$ (\cite{Guth_d=2, BassamShayya}; see also \cite{HWang}) when $d=2$, and for $q > q(d)$, for some (explicit, yet complicated) $q(d) < \frac{2(d+3)}{d+1}$ when $d \geq 3$ \cite{Guth_dgeq3, HickmanRogers, TaoParab, TVV}.  

Classical symmetries of both the Fourier transform and the paraboloid lead to a wealth of symmetries for the extension and restriction operators.  These symmetries, in turn, are of paramount importance in the study of uniqueness and compactness questions for maximizers and near maximizers of the Lebesgue bounds for $\scriptE$ and $\scriptR$.  

To be more precise, by a symmetry of the extension operator $\scriptE:L^p_\xi \to L^q_{t,x}$, we mean a pair $(S,T)$ with $S$ an isometry of $L^p_\xi$, $T$ an isometry of $L^q_{t,x}$, and $\scriptE \circ S = T \circ \scriptE$.  We let $\scriptS_p$ denote the group of all symmetries of $\scriptE:L^p_\xi \to L^q_{t,x}$.  For simplicity, we will often abuse notation by associating the symmetry $(S,T)$ with its first coordinate, $S$.    Under this convention, $\scriptS_p$ contains the dilations, $f \mapsto \lambda^{\frac dp} f(\lambda \cdot)$, the frequency translations, $f \mapsto f(\cdot-\xi_0)$, the space-time modulations, $f \mapsto e^{i(t_0,x_0)(|\cdot|^2,\cdot)}f$, and compositions of these three.  There are other symmetries, such as rotations and multiplication by unimodular constants, but these generate compact subgroups of $\scriptS_p$, and therefore play no role in our analysis.  We let $\tilde \scriptS_p$ denote the subgroup of $\scriptS_p$ generated by the aforementioned noncompact symmetry groups.  Finally, we note that if $(S,T)$ is a symmetry of $\scriptE:L^p_{\xi} \to L^q_{t,x}$, then $(T^*, S^*)$ is a symmetry of the corresponding restriction operator, $\scriptR:L^{q'}_{t,x} \to L^{p'}_\xi$.

Fix a pair $(p,q)$ for which the extension operator is bounded, and let $A_p:=\|\scriptE\|_{L^p_\xi \to L^q_{t,x}}$.  In this article we take up two natural questions:  Do there exist nonzero functions that achieve equality in the estimates
\begin{equation} \label{E:extn and rest}
\|\scriptE f\|_q \leq A_p\|f\|_p, \qtq{and} \|\scriptR g\|_{p'} \leq A_p \|g\|_{q'}?,
\end{equation}
and, Must a function that nearly achieves equality be close to one that achieves equality?  Under the additional condition that there exists an exponent pair $(\tilde p,\tilde q)$ satisfying $\tilde p > p$ at which $\scriptE$ is bounded, we answer both of these questions in the affirmative, and show in addition that the intersection of the $L^p_\xi$ (resp., $L^{q'}_{t,x}$) unit sphere with the set of all $f$ (resp.\ $g$) achieving equality in \eqref{E:extn and rest} is compact modulo symmetries.  

To state our result more precisely, we will call a nonzero $L^p_\xi$ function $f$ an (extension) \textit{extremizer} if it achieves equality in \eqref{E:extn and rest} and a nonzero $L^p_\xi$ sequence $\{f_n\}$ \textit{extremizing} if $\lim\frac{\|\scriptE f_n\|_q}{\|f_n\|_p} = A_p$; restriction extremizers are defined analogously.  For questions of compactness, it is most natural to work with \textit{$L^p_\xi$-normalized extremizing sequences}, that is, extremizing sequences $\{f_n\}$ with $\|f_n\|_p \equiv 1$.  

\begin{theorem} \label{T:main}
Assume that the extension operator extends as a bounded operator from $L^{p_0}_\xi$ to $L^{q_0}_{t,x}$ for some $1 < p_0 < q_0 = \tfrac{d+2}d p_0'$.  Let $1 < p < p_0$ and $q=\tfrac{d+2}dp'$.  Define $A_p$ to be the $L^p_\xi \to L^q_{t,x}$ operator norm of $\scriptE$.  If $\{f_n\} \subseteq L^p_\xi$ is an $L^p_\xi$-normalized extremizing sequence, then, after passing to a subsequence, there exist symmetries $\{S_n\} \subseteq \tilde\scriptS_p$ such that $S_n f_n \to f$, in $L^p_\xi$, for some extremizer $f$ of the extension inequality in \eqref{E:extn and rest}.  
\end{theorem}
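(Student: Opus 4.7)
The overall strategy I would pursue is the concentration-compactness / profile decomposition approach: extract from an arbitrary normalized extremizing sequence all possible modes of concentration permitted by the noncompact symmetry group $\tilde\scriptS_p$ (dilations, frequency translations, and space-time modulations), then exploit the gap $q > p$ to rule out splitting of mass between two or more profiles, forcing convergence (after a symmetry) to a single extremizer.

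The main technical step is a profile decomposition: for any sequence $\{f_n\}$ bounded in $L^p_\xi$, after passing to a subsequence, there exist profiles $\phi^j \in L^p_\xi$ and symmetries $S_n^j \in \tilde\scriptS_p$ that are pairwise asymptotically orthogonal, in the sense that $(S_n^k)^{-1} S_n^j$ leaves every compact subset of $\tilde\scriptS_p$ as $n \to \infty$ for $j \neq k$, such that $f_n = \sum_{j=1}^J S_n^j \phi^j + r_n^J$ obeys the asymptotic Pythagorean identity $\|f_n\|_p^p = \sum_{j=1}^J \|\phi^j\|_p^p + \|r_n^J\|_p^p + o_n(1)$, a parallel decoupling $\|\scriptE f_n\|_q^q = \sum_{j=1}^J \|\scriptE \phi^j\|_q^q + \|\scriptE r_n^J\|_q^q + o_n(1)$, and, crucially, $\lim_{J\to\infty} \limsup_n \|\scriptE r_n^J\|_q = 0$. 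The remainder estimate is where the hypothesis $p_0 > p$ enters: interpolating the trivial $L^p \to L^q$ bound against the hypothesized $L^{p_0}\to L^{q_0}$ bound reduces matters to showing that an appropriate $L^{q_0}$-type quantity of $\scriptE r_n^J$ tends to zero with $J$, and this is done by iteratively applying an inverse restriction estimate of the following flavor: if a sequence has uniformly bounded $L^p$ norm but extension norm bounded away from zero, then some element of $\tilde\scriptS_p$ must bring it to a nontrivial weak $L^p$ limit.

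With the decomposition in hand, the conclusion is short. Applying both identities to $\{f_n\}$ with $\|f_n\|_p = 1$ and $\|\scriptE f_n\|_q \to A_p$, and using the profile-by-profile bound $\|\scriptE \phi^j\|_q \leq A_p \|\phi^j\|_p$, one finds $A_p^q = \sum_j \|\scriptE \phi^j\|_q^q \leq A_p^q \sum_j \|\phi^j\|_p^q$ while $\sum_j \|\phi^j\|_p^p \leq 1$; since $q > p$ and each $\|\phi^j\|_p \leq 1$ gives $\|\phi^j\|_p^q \leq \|\phi^j\|_p^p$, these two inequalities are compatible only if a single profile $\phi$ has $\|\phi\|_p = 1$, all others vanish, and $r_n^J \to 0$ in $L^p$. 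Then $\|\scriptE \phi\|_q = A_p$, so $\phi$ is an extremizer and $(S_n)^{-1} f_n \to \phi$ along the chosen subsequence. The main obstacle, and the technical heart of the argument, is the inverse restriction estimate underpinning profile extraction: in $L^2$-based profile decompositions one can lean on bilinear restriction, but in the present $L^p$ setting with the full restriction conjecture still open, one must construct a refined restriction inequality from the mere black-box hypothesis $\scriptE:L^{p_0} \to L^{q_0}$ together with a Whitney- or cap-decomposition of the paraboloid, and then verify that any persistent concentration is realized along a dilation, frequency translation, or space-time modulation rather than along some more exotic direction.
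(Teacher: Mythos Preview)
Your outline has a genuine gap at the claimed asymptotic Pythagorean identity
\[
\|f_n\|_p^p = \sum_{j=1}^J \|\phi^j\|_p^p + \|r_n^J\|_p^p + o_n(1).
\]
For $p\neq 2$ this is \emph{not} a consequence of weak convergence plus asymptotic orthogonality of the symmetries: subtracting the weak limit of an $L^p$ sequence can \emph{increase} the norm (the paper cites Opial's example), and Br\'ezis--Lieb requires a.e.\ convergence, which space-time modulations do not provide on the $\xi$-side. The decoupling of $\|\scriptE f_n\|_q^q$ is fine (local smoothing gives a.e.\ convergence in $(t,x)$), but without the $L^p_\xi$ side you cannot run the $q>p$ convexity argument as you wrote it.

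The paper's workaround is precisely engineered to sidestep this. First, an improved Strichartz inequality (Lemma~\ref{L:big bite}, based on Tao's bilinear estimate --- which \emph{is} available here, contrary to your worry) controls $\|\scriptE f\|_q$ by a positive quantity involving only $|f|$. This lets one show (Propositions~\ref{P:freq nibbles} and~\ref{P:freq localization}) that a near-extremizer is, after a dilation and frequency translation, concentrated on a fixed ball with bounded amplitude. Only then is a profile decomposition applied, and because the sequence is now bounded in $L^2_\xi$, one can invoke the known $L^2$ profile decomposition; moreover the only surviving symmetries are space-time modulations. Even so, the $L^p$ decoupling one actually obtains (Proposition~\ref{P:Lp profile}(ii)) is the weaker inequality
\[
\liminf_n\Bigl(\|f_n\|_p - \bigl(\textstyle\sum_j \|\phi^j\|_p^{\tilde p}\bigr)^{1/\tilde p}\Bigr)\ge 0,\qquad \tilde p=\max\{p,p'\},
\]
proved via a vector-valued projection lemma (Lemma~\ref{L:Schrodinger projections}) and interpolation, not a Pythagorean identity. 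The final convexity argument then uses $q>\tilde p$ rather than $q>p$. In short, the inverse Strichartz / profile extraction is not the hard part; the $L^p_\xi$ norm decoupling is, and your proposal treats it as automatic.
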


By duality and strict convexity of $L^p$ when $1 < p < \infty$, we have the analogous result for the restriction operator.  Details of this deduction are given in Section~\ref{S:corollary}.

\begin{corollary}\label{C:cor}
Assume that the restriction operator extends as a bounded operator from $L^{p_0}_{t,x}$ to $L^{q_0}_\xi$, for some $\infty > q_0 > p_0 = (\tfrac{d+2}d q)'$.  Let $1 < p < p_0$ and $q=\tfrac d{d+2}p'$.  Then any $L^p_{t,x}$-normalized extremizing sequence for $\scriptR$ has a subsequence that, modulo symmetries, converges in $L^p_{t,x}$ to an extremizer of the restriction inequality in \eqref{E:extn and rest}.
\end{corollary}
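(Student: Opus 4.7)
My plan is to deduce Corollary~\ref{C:cor} from Theorem~\ref{T:main} by duality, using the uniform convexity of $L^p$ to transfer compactness from the (dual) extension side back to the restriction side. Since $\scriptE$ is the formal $L^2$-adjoint of $\scriptR$, one has $\|\scriptR\|_{L^p_{t,x}\to L^q_\xi}=\|\scriptE\|_{L^{q'}_\xi\to L^{p'}_{t,x}}=:A$, and the hypothesis that $\scriptR$ is bounded at $(p_0,q_0)$ with $q_0>p_0$ translates to boundedness of $\scriptE$ at the dual pair $(q_0',p_0')$, which satisfies $q_0'<p_0'=\tfrac{d+2}{d}q_0$. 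Since $p<p_0$ forces $q'<q_0'$, the hypothesis of Theorem~\ref{T:main} is satisfied for $\scriptE$ at exponent $q'$ with reference pair $(q_0',p_0')$.

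Given an $L^p$-normalized extremizing sequence $\{g_n\}$ for $\scriptR$, I form the $L^{q'}$-unit vector
$$
f_n := \frac{|\scriptR g_n|^{q-2}\scriptR g_n}{\|\scriptR g_n\|_q^{q-1}},
$$
the image of $\scriptR g_n/\|\scriptR g_n\|_q$ under the duality map $J_q(h):=|h|^{q-2}h$. Then $\langle\scriptE f_n,g_n\rangle=\langle f_n,\scriptR g_n\rangle=\|\scriptR g_n\|_q\to A$, which combined with the operator bound $\|\scriptE f_n\|_{p'}\leq A$ and H\"older forces $\|\scriptE f_n\|_{p'}\to A$. Thus $\{f_n\}$ is $L^{q'}$-extremizing for $\scriptE$, and Theorem~\ref{T:main} produces, along a subsequence, symmetries $S_n\in\tilde\scriptS_{q'}$ and an extension extremizer $f^*$ with $S_n f_n\to f^*$ in $L^{q'}$.

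Let $T_n$ be the $L^{p'}$-isometry paired with $S_n$ (so $\scriptE S_n=T_n\scriptE$); dualizing $\scriptE S_n = T_n\scriptE$ shows $(T_n^{-*},S_n^{-*})$ is a symmetry of $\scriptR:L^p\to L^q$. The natural candidate limit of the symmetrized sequence $\tilde g_n:=T_n^{-*}g_n$ is
$$
g^* := J_{p'}\!\left(\frac{\scriptE f^*}{A}\right) = \frac{|\scriptE f^*|^{p'-2}\scriptE f^*}{\|\scriptE f^*\|_{p'}^{p'-1}},
$$
which is routinely checked to be an $L^p$-normalized restriction extremizer. To establish $\tilde g_n\to g^*$, set $w_n:=\scriptE f_n/\|\scriptE f_n\|_{p'}$; then $\|g_n\|_p=\|w_n\|_{p'}=1$ and $\langle g_n,w_n\rangle=\|\scriptR g_n\|_q/\|\scriptE f_n\|_{p'}\to 1$, so the quantitative converse to H\"older's inequality provided by the uniform convexity of $L^p$ (here using $1<p<\infty$) yields $g_n - J_{p'}(w_n)\to 0$ in $L^p$. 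Applying the $L^p$-isometry $T_n^{-*}$ and using the commutation identity $T_n^{-*}\circ J_{p'} = J_{p'}\circ T_n$, verified on each generator of $\tilde\scriptS_{q'}$ (spacetime translations, Galilean boosts, dilations, and modulations) by the covariant transformation of $|w|^{p'-2}w$, we obtain
$$
\tilde g_n - J_{p'}\!\left(\frac{\scriptE(S_n f_n)}{\|\scriptE f_n\|_{p'}}\right)\to 0 \quad\text{in } L^p;
$$
continuity of $\scriptE:L^{q'}\to L^{p'}$ and of $J_{p'}:L^{p'}\to L^p$, together with $S_n f_n\to f^*$ and $\|\scriptE f_n\|_{p'}\to A$, then gives $\tilde g_n\to g^*$.

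The main obstacle is the stability step: one must quantitatively convert near-equality in $|\langle u,v\rangle|\leq\|u\|_p\|v\|_{p'}$ into closeness of $u$ to the explicit dual function $J_{p'}(v/\|v\|_{p'})$ in $L^p$. This is a classical consequence of the uniform convexity of $L^p$ for $1<p<\infty$ and is the sole mechanism by which compactness of $\{f_n\}$ (modulo symmetries) is transferred to compactness of $\{g_n\}$.
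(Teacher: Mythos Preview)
Your proposal is correct and follows the same duality-plus-convexity route as the paper: form the dual extension extremizing sequence $f_n$ via the duality map, apply Theorem~\ref{T:main} to get $S_nf_n\to f$, and then use strict convexity of $L^p$ to transfer compactness back to $\{g_n\}$. The only difference is in that last step---you invoke uniform convexity directly (near-equality in H\"older forces $g_n$ close to $J_{p'}(\scriptE f_n/\|\scriptE f_n\|_{p'})$, and you then push the symmetry through the commutation $T_n^{-*}\circ J_{p'}=J_{p'}\circ T_n$), whereas the paper first applies the symmetries, passes to a weak subsequential limit $g_n\rightharpoonup g$, checks $\|g\|=1$ via the pairing $\langle g,\scriptE f\rangle=\lim\langle g_n,\scriptE f_n\rangle$, and concludes strong convergence by Radon--Riesz; these are equivalent manifestations of the same convexity, with your version slightly more explicit about the limit and the paper's slightly shorter by avoiding the commutation and continuity-of-$J_{p'}$ verifications.
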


That $\scriptE:L^1_\xi \to L^\infty_{t,x}$ and $\scriptR:L^1_{t,x} \to L^\infty_\xi$ possess extremizers and that extremizing sequences for these operators need not be compact are both elementary; indeed, for both, one need only consider sequences of the form $\{\phi + \phi(\cdot + ne_1)\}$, for some $0 \leq \phi \in L^1$ with $\phi \not\equiv 0$.  

The Stein--Tomas--Strichartz case $p=2$ has been well-studied, and extremizers are known to exist in all dimensions \cite{BennettBezCarberyHundertmark, Foschi07, HundertmarkZharnitsky, ShaoParab}.  It is conjectured that radial Gaussians are, up to symmetries, the unique extremizers of the $L^2_\xi \to L^{\frac{2(d+2)}d}_{t,x}$ inequality, but this is only known in dimensions $d=1,2$ \cite{Foschi07}, wherein the exponent $q=\frac{d+2}d 2'$ is an even integer.  Curiously, it is known in all dimensions that radial Gaussians are not extremal for any $L^p_\xi \to L^q_{t,x}$ inequality for $\scriptE$ unless $p \in \{1,2\}$ \cite{ChristQuilodran}, but outside of the cases $p=1,2$, extremizers had not been previously shown to exist.  

Unfortunately, the aforementioned proofs of existence of extremizers to the Stein--Tomas--Strichartz inequality rely on Plancherel and the Hilbert space structure of $L^2_\xi$.  The proof that seems most amenable to generalization is that of \cite{ShaoParab} (see also \cite{Lions}), which applies the linear profile decomposition of \cite{BV, CarlesKeraani, MerleVega}. 

We turn to a heuristic overview of the statement and proof of the $L^2_\xi$-based linear profile decomposition in general dimensions.  (We give a formal statement later.)  This overview will allow us to explain some of the difficulties in adapting the $L^2_\xi$-based arguments to the general case.   Set $q_2:= \frac{2(d+2)}d$.  By Tao's bilinear extension estimate for the paraboloid \cite{TaoParab} and an adaptation of the bilinear to linear argument of Tao--Vargas--Vega \cite{TVV}, one can prove an ``improved Strichartz inequality,''  which implies that if $\{f_n\}$ is an $L^2_\xi$-normalized sequence with $\|\scriptE f_n\|_{q_2} \not\to 0$, then there is a nontrivial contribution coming from a portion of $f_n$ well-adapted to a ball \cite{BV}.  After passing to a subsequence and applying a suitable sequence $\{S_n\}$ of symmetries, there is a nontrivial weak limit:  $S_n^{-1}f_n \rightharpoonup f \neq 0$.  For large $n$, $f_n-S_n f_n$ has a smaller $L^2_\xi$ norm, via arguments that are elementary in Hilbert spaces.  By repeating this process, we can, after passing to a subsequence, write the extension $\scriptE f_n$ as a sum of a finite number of asymptotically (pairwise) orthogonal profiles, together with an error that is small in $L^{q_2}_{t,x}$.  One can show, either directly \cite{CarlesKeraani, MerleVega} or by using local smoothing estimates \cite{ClayNotes}, that the $L^{q_2}_{t,x}$ norms of  asymptotically orthogonal bubbles decouple.  Thus, after passing to a subsequence, for each $J \geq 1$ we may decompose $f_n = \sum_{j=1}^J S_n^j \phi^j + w_n^J$, where 
\begin{equation} \label{E:both decoup}
\begin{gathered}
\lim_{n \to \infty} \|f_n\|_2^2 - \bigl(\sum_j \|\phi_j\|_2^2+\|w_n^J\|_2^2\bigr) = 0 \\
 \lim_{n \to \infty} \|\scriptE f_n\|_{q_2}^{q_2} - \bigl( \sum_j \|\scriptE \phi_j\|_{q_2}^{q_2}+\|w_n^J\|_{q_2}^{q_2}\bigr) = 0,
\end{gathered}
\end{equation}
and, moreover, $\lim_{J \to \infty} \limsup_{n \to \infty} \|\scriptE w_n^J\|_{q_2} \to 0$, 

If the sequence $f_n$ is extremizing, strict convexity (coming from $2 < q_2$) dictates that in fact there is only one profile and the error $w_n$ tends to zero in $L^2_\xi$ \cite{ShaoParab}.  

Unfortunately, for any $p \neq  2$, we hit a snag early on, because it is possible to increase the limit of the norms of a sequence in $L^p_\xi$ by subtracting the weak limit of the sequence; a simple example is given in \cite{OpialBulletin67}.  It is natural to try to subtract a ``positive'' quantity from the sequence to reduce the $L^p_\xi$ norm (such as the portion of each $f_n$ that is well-adapted to a ball), but this presents some challenges since $\scriptE$ is not a positive operator; in particular, we must keep the spacetime modulations under control, and in order to use convexity, it is crucial that we have equality in both estimates in \eqref{E:both decoup}.  The need for this precision is also why the general framework for profile decompositions in Banach spaces found in \cite{SoliminiTintarev} does not seem to directly yield our result.  

Our approach is to first control the positive symmetries: the dilations and frequency translations.  To gain this control, we generalize the improved Strichartz inequality of \cite{BV, CarlesKeraani, MerleVega} to $L^p_\xi$ (Lemma~\ref{L:big bite}).  This inequality controls the extension with a nontrivial positive operator \eqref{E:extn^2}, whose norm can be reduced by deleting the portions of each $f_n$ well-adapted to balls.  The portion of each $f_n$ which contributes to $\scriptE f_n$ is, by convexity, carried on a controllable number of balls (Proposition~\ref{P:freq nibbles}), and we then use convexity again to show that the major portion of each $f_n$ is, in fact, carried on a single ball (Proposition~\ref{P:freq localization}).  Applying a symmetry, we then have an extremizing sequence in $L^p_\xi$ that is nearly bounded with compact support, which means that it is almost in $L^2_\xi$.  (This part of the argument is closely related to Lieb's \textit{method of missing mass} \cite{Lieb}; see also \cite{FLS}.)

Truncating introduces some error, but lets us apply the $L^2_\xi$-based profile decomposition.  The only profiles that can arise are spacetime translates, because the compact support and boundedness of our truncations rule out dilations and frequency translations.  In order to reduce both the $L^p_\xi$ and the $L^q_{t,x}$ norm, we need to take care with how we extract $L^p_\xi$ bubbles from this decomposition; we do this by carefully truncating on the spacetime side and bounding a vector-valued operator (Lemma~\ref{L:Schrodinger projections}).  Finally, this yields an $L^p_\xi$-based profile decomposition (Proposition~\ref{P:Lp profile}) in which both the $L^p_\xi$ norms of the profiles and the $L^q_{t,x}$ norms of the extensions are sufficiently decoupled that we can, in Section~\ref{S:prove profile}, prove Theorem~\ref{T:main}.

\subsection*{Terminology}
For nonnegative numbers $A$ and $B$, we will write $A \lesssim B$ to mean that $A \leq C B$ for a constant $C$ that depends only on $d,p_0,p,A_{p_0}$ but that is otherwise allowed to change from line to line.  A dyadic interval is an interval of the form $[m2^{-n},(m+1)2^{-n}]$, $m,n \in \Z$, and a dyadic cube is a product of dyadic intervals all having the same length.  We denote the set of all dyadic cubes of side length $2^{-k}$ by $\scriptD_k$, and an individual dyadic cube will typically be denoted $\tau$.  To simplify later statements, we will consider the empty set to be a dyadic cube, $\emptyset \in \scriptD_{\infty}$.  We will also use the little `o' notation; $o_R(1)$ will denote a quantity that tends to zero as $R \to \infty$.  

\subsection*{Acknowledgements}
The author would like to thank Mike Christ and Taryn Flock for valuable conversations in the early stages of this project.  These discussions helped inspire the eventual idea for the proof.  She would also like to thank the anonymous referee for a number of thoughtful and extremely helpful comments.  This work was supported in part by NSF DMS-1600458; in addition, part of this research was supported by NSF DMS-1440140, while the author was in residence at the Mathematical Sciences Research Institute in Berkeley, California, during the Spring Semester of 2017.


\section{Positive profiles}


In this section, we prove the following proposition, which allows us to nibble away at the absolute value of a function, while reducing its extension in a quantitative way.  

\begin{proposition} \label{P:freq nibbles}
There exist a sequence $\rho_j \searrow 0$ such that for every $f \in L^p$, there exists a sequence $\tau^j$ of dyadic cubes such that if the sequences $g^j,r^j$ are defined inductively by setting 
$$
r^0:=f, \qquad g^j:=r^{j-1}\chi_{\tau^j}\chi_{\{|f|<\rho_j^{-1}\|f\|_p|\tau^j|^{-\frac1p}\}}, \qquad r^j:=r^{j-1}-g^j, 
$$
then $\|\scriptE h\|_q \leq \rho_j \|f\|_p$ for all $j$ and all measurable functions $|h| \leq |r^j|$.  
\end{proposition}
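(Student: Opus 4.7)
The plan is to run a greedy bubble-extraction scheme fueled by Lemma~\ref{L:big bite}. Set $\rho_0:=A_p$, and proceed inductively under the maintained hypothesis that $\|\scriptE h\|_q\leq \rho_{j-1}\|f\|_p$ for every measurable $|h|\leq |r^{j-1}|$ — this is automatic at stage $j=1$ since $r^0=f$. If the same bound already holds with $\rho_j$ in place of $\rho_{j-1}$, declare stage $j$ trivial: $\tau^j:=\emptyset$, $g^j:=0$, $r^j:=r^{j-1}$. Otherwise, pick $|h|\leq |r^{j-1}|$ with $\|\scriptE h\|_q>\rho_j\|f\|_p$ and feed it to Lemma~\ref{L:big bite}. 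Since (per the introduction) that lemma controls the $L^q_{t,x}$ norm of the extension by a positive functional of $|h|$ at dyadic scales, it produces a dyadic cube $\tau^j$ on which the natural-height truncation $h\chi_{\tau^j}\chi_{\{|h|<\rho_j^{-1}\|f\|_p|\tau^j|^{-1/p}\}}$ carries $L^p$ mass at least $c\rho_j^{\alpha}\|f\|_p$ for some $\alpha>0$ read off from the exponents in the lemma.

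Because $|r^{j-1}|$ takes only the values $0$ and $|f|$ pointwise — each previously subtracted $g^k$ zeroes out a characteristic subset on which $r^{k-1}=f$ — the truncations $\{|f|<H\}$ and $\{|r^{j-1}|<H\}$ agree on $\supp r^{j-1}$. Consequently the $g^j$ defined in the proposition pointwise dominates the big-bite bubble extracted from $h$, so $\|g^j\|_p\geq c\rho_j^{\alpha}\|f\|_p$. Since $r^j$ and $g^j$ are disjointly supported,
$$
\|r^j\|_p^p=\|r^{j-1}\|_p^p-\|g^j\|_p^p\leq \|r^{j-1}\|_p^p-c\rho_j^{\alpha p}\|f\|_p^p.
$$
Moreover, the monotone majorization passes to the updated remainder: Lemma~\ref{L:big bite} applied to any $|\tilde h|\leq |r^j|$ yields $\|\scriptE \tilde h\|_q\leq \rho_j\|f\|_p$ at the next stage, since the positive operator on the right-hand side has had its relevant dyadic mass in $\tau^j$ excised.

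To close, choose $\rho_j\searrow 0$ slowly enough that $\sum_j\rho_j^{\alpha p}=\infty$ (e.g.\ $\rho_j:=j^{-\gamma}$ for sufficiently small $\gamma>0$). Since $\|r^j\|_p\leq \|f\|_p$, at most finitely many stages can be nontrivial before the cumulative loss exceeds $\|f\|_p^p$; from that index onward every stage is automatically trivial, and the target bound $\|\scriptE h\|_q\leq \rho_j\|f\|_p$ for all $|h|\leq |r^j|$ propagates. The main obstacle is the first paragraph: one has to extract from the positive-operator form of Lemma~\ref{L:big bite} a bubble whose height truncation is exactly the $\rho_j^{-1}\|f\|_p|\tau^j|^{-1/p}$ prescribed in the proposition — which requires a careful pigeonhole between scale and amplitude in the lemma's right-hand side — and to verify that replacing the truncation of $|r^{j-1}|$ by that of $|f|$ (as in the statement) costs nothing on $\supp r^{j-1}$, which is the $0/|f|$ dichotomy observed above.
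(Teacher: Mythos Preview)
There is a genuine gap. The step that fails is the assertion that after a \emph{single} extraction at stage $j$, ``Lemma~\ref{L:big bite} applied to any $|\tilde h|\leq |r^j|$ yields $\|\scriptE \tilde h\|_q\leq \rho_j\|f\|_p$ \ldots\ since the positive operator on the right-hand side has had its relevant dyadic mass in $\tau^j$ excised.'' The right-hand side of \eqref{E:big bite} is a supremum over \emph{all} dyadic cubes and all levels $l$; removing the portion of $r^{j-1}$ adapted to one cube $\tau^j$ need not lower that supremum below the threshold. Concretely, if $r^{j-1}$ is a sum of many well-separated unit bumps, deleting one bump leaves the supremum in \eqref{E:big bite} essentially unchanged and $\|\scriptE r^j\|_q$ essentially as large as $\|\scriptE r^{j-1}\|_q$. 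Consequently the induction does not close at nontrivial stages, and the proposition's conclusion ``for all $j$'' is not established. Your mass-accounting observation (that $\sum_j\rho_j^{\alpha p}=\infty$ forces only finitely many nontrivial stages) is correct, but it yields the $\rho_j$-bound only \emph{eventually}, not at the nontrivial indices themselves. A secondary issue: you select $\tau^j$ from an arbitrary witness $h$ with large extension, but the bound you need is uniform over all $|\tilde h|\leq|r^j|$, and a different $\tilde h$ may concentrate on a completely different cube untouched by your extraction.

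The paper's proof addresses exactly this. It selects $\tau^j$ \emph{maximally} in the sense that $\|g^j\|_p=\max_\tau\|r^{j-1}\chi_\tau\chi_{\{|f|<\rho_j^{-1}|\tau|^{-1/p}\}}\|_p$, and it takes $\rho_j$ piecewise constant on plateaux $J_k<j\leq J_{k+1}$. Maximality plus disjointness of the $g^j$ forces
\[
\max_\tau\bigl\|r^{J_{k+1}}\chi_\tau\chi_{\{|f|<\rho_{J_{k+1}}^{-1}|\tau|^{-1/p}\}}\bigr\|_p\;\leq\;(J_{k+1}-J_k)^{-1/p},
\]
and this smallness is then fed back into \eqref{E:big bite} (after a cube-subdivision to reconcile the height truncation $\rho_j^{-1}$ with the $2^l$ in the lemma) to get the $2^{-(k+1)}A_p$ bound at stage $J_{k+1}$, hence at all later stages by monotonicity of the remainders. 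The fix to your scheme is therefore to hold $\rho_j$ constant for a long enough stretch that many maximal bubbles are removed before claiming the bound, rather than to attempt the drop after one extraction.
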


The main step in proving the proposition is to prove the following lemma.

\begin{lemma} \label{L:big bite}
Let $f \in L^p$.  For some $0 < \theta < 1$ and $c_0 > 0$,
\begin{equation} \label{E:big bite}
\|\scriptE f\|_q \lesssim \sup_{k \in \Z} \sup_{\tau \in \scriptD_k} \sup_{l \geq 0} 2^{-c_0l}\|f_{\tau,l}\|_p^\theta \|f\|_p^{1-\theta},
\end{equation}
where $f_{\tau,l}$ equals $f$ multiplied by the characteristic function of 
$$
\tau \cap \{|f|<2^l\|f\|_p |\tau|^{-\frac1p}\}.
$$  
\end{lemma}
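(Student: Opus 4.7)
The plan is to adapt the improved Strichartz inequality of Bourgain--Vega and Carles--Keraani--Merle--Vega from the $L^2$ Stein--Tomas setting to the $L^p$ setting, using the hypothesis $\scriptE:L^{p_0}\to L^{q_0}$ in place of Stein--Tomas. The overall structure has three ingredients: a bilinear extension estimate with quantitative gain on scale, a Whitney decomposition / bilinear-to-linear step in the spirit of Tao--Vargas--Vega, and a dyadic height decomposition that converts $L^r$ norms of cube pieces into the truncated $L^p$ norms $\|f_{\tau,l}\|_p$ with an exponential gain in $l$.

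First I would establish a bilinear extension estimate adapted to the present exponents. Interpolating Tao's sharp bilinear extension estimate for the paraboloid (which lives well below $q_0$) with the trivial bilinear consequence of the hypothesis
\[
\|\scriptE f_{\tau_1}\scriptE f_{\tau_2}\|_{q_0/2}\lesssim \|f_{\tau_1}\|_{p_0}\|f_{\tau_2}\|_{p_0},
\]
yields estimates at a family of bilinear exponent pairs $(r,s)$. Using parabolic rescaling to reduce a pair of transverse dyadic cubes $\tau_1,\tau_2\in\scriptD_k$ at angular separation $\sim 2^{-k}$ to a pair of unit separation, one obtains a bound of the schematic form
\[
\|\scriptE f_{\tau_1}\scriptE f_{\tau_2}\|_{q/2}\lesssim 2^{-ck}\|f_{\tau_1}\|_{r}\|f_{\tau_2}\|_{r},\qquad c>0,
\]
where $r$ can be chosen strictly larger than $p$ (taking the interpolation parameter close enough to the bilinear $L^2$ endpoint). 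The positivity of the scaling gain $c$ is the payoff of interpolating with the bilinear $L^2$ estimate rather than using a linear one alone.

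Second, I would perform a Whitney decomposition of $\{(\xi_1,\xi_2)\in\R^d\times\R^d\}$, writing
\[
|\scriptE f|^2=\sum_k \sum_{(\tau_1,\tau_2)\in W_k}\scriptE f_{\tau_1}\overline{\scriptE f_{\tau_2}},
\]
where $W_k$ consists of pairs of cubes in $\scriptD_k$ that are transverse at scale $2^{-k}$. Taking the $L^{q/2}$ norm, applying the bilinear estimate to each pair, and summing via the Tao--Vargas--Vega $\ell^r$-orthogonality / Rubio de Francia machinery produces
\[
\|\scriptE f\|_q\lesssim \bigl(\sup_{k,\tau\in\scriptD_k}\|f_\tau\|_r\bigr)^\theta\|f\|_p^{1-\theta}
\]
for some $\theta\in(0,1)$, after absorbing the geometric factors $2^{-ck}$ into the sum on $k$.

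Third, to pass from $\|f_\tau\|_r$ to the height-truncated $L^p$ norms $\|f_{\tau,l}\|_p$ with the gain $2^{-c_0 l}$, I would decompose $f_\tau=\sum_{l\ge 0}(f_{\tau,l}-f_{\tau,l-1})$ (with $f_{\tau,-1}:=0$). Since the annular piece is pointwise bounded by $2^l\|f\|_p|\tau|^{-1/p}$ and supported where $|f|\ge 2^{l-1}\|f\|_p|\tau|^{-1/p}$, interpolation between its $L^\infty$ bound and its $L^p$ size $\|f_{\tau,l}\|_p$ controls its $L^r$ norm by a power of $2^l$ times $\|f_{\tau,l}\|_p$ to a fractional power, and summing the resulting geometric series in $l$ (which converges because $r>p$, giving negative exponent on the height) yields the claimed supremum form with an exponential gain $2^{-c_0 l}$.

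The main obstacle is Step one: the interpolation must be arranged so that the final Lebesgue exponent on the bilinear side is exactly $q/2$, the integrability exponent on $f$-side lies strictly above $p$ (to absorb the height interpolation in Step three), and the Whitney scaling gain is strictly positive. Since the hypothesized linear endpoint lies above $p$ rather than below, one has to interpolate toward the bilinear $L^2$ theorem to gain room; doing this while simultaneously preserving the positivity of the decay in $k$ and keeping the target at $q/2$ is the delicate part of the argument.
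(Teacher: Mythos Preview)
Your three-step outline (bilinear estimate $\to$ Whitney/TVV square-function step $\to$ height decomposition) is exactly the architecture of the paper's proof.  The problem is that you have the direction of the interpolated Lebesgue exponent backwards, and this makes Step~3 fail.

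You interpolate toward the bilinear $L^2$ theorem and the $(p_0,q_0)$ hypothesis to land at an input exponent $r>p$.  The paper instead interpolates to an exponent $s<p$ (see \eqref{E:Ls bilin}).  This is not a cosmetic difference: the height decay in Step~3 comes from the fact that the annular piece $f_\tau^l$, which lives where $|f|\sim 2^l|\tau|^{-1/p}$, has support of measure $\lesssim 2^{-lp}|\tau|$ by Chebyshev, and hence
\[
\|f_\tau^l\|_s \lesssim 2^l|\tau|^{-1/p}\bigl(2^{-lp}|\tau|\bigr)^{1/s}=2^{l(1-p/s)}|\tau|^{1/s-1/p}.
\]
The exponent $1-p/s$ is negative precisely when $s<p$; that is what produces the geometric decay in $l$ and ultimately the factor $2^{-c_0 l}$.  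With $r>p$ the same computation yields $2^{l(1-p/r)}$ with $1-p/r>0$, so the series in $l$ diverges.  Your parenthetical ``which converges because $r>p$, giving negative exponent on the height'' is exactly inverted.

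A second, related issue: the intermediate inequality you write in Step~2,
\[
\|\scriptE f\|_q\lesssim \bigl(\sup_{k,\tau}\|f_\tau\|_r\bigr)^\theta\|f\|_p^{1-\theta},
\]
is not scale-invariant when $r\neq p$, so it cannot hold as stated.  The paper keeps the scale factor explicit, arriving instead at the $\ell^{2t}$ expression
\[
\Bigl(\sum_k\sum_{\tau\in\scriptD_k}|\tau|^{-2t(\frac1s-\frac1p)}\|f_\tau\|_s^{2t}\Bigr)^{1/(2t)}
\]
(inequality \eqref{E:extn^2}), and then proves a separate lemma (Lemma~\ref{L:Xs2t bound}) that bounds this by the height-truncated supremum; the hypotheses there are $2t>p>s$, and the proof uses both inequalities in an essential way.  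So once you correct the interpolation to produce $s<p$, you should also expect to carry the $|\tau|$-weight through Step~2 rather than ``absorb'' it, and then run the paper's Lemma~\ref{L:Xs2t bound} (or an equivalent) for Step~3.
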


Assuming the lemma for a moment, we give the short proof of Proposition~\ref{P:freq nibbles}.

\begin{proof}[Proof of Proposition~\ref{P:freq nibbles}]
Given $0 \neq f \in L^p$ and a sequence $\{\rho_j\} \subseteq (0,\infty)$, we may, by a routine application of the Dominated Convergence Theorem, select dyadic cubes $\tau^j$ so that 
$$
\|g^j\|_p = \max_{\tau\,\text{dyadic}}\|r^{j-1} \chi_\tau \chi_{\{|f|<\rho_j^{-1}\|f\|_p|\tau|^{-\frac1p}\}}\|_p,
$$
in the notation of the proposition.  We will inductively construct a sequence $\rho_j \searrow 0$ such that the conclusion of the proposition holds for every $f$ with this choice of dyadic cubes.  

We assume that we are given $k \geq 0$ and integers $0=J_0 < \cdots < J_k$ such that the conclusions of the proposition hold for the sequence 
\begin{equation} \label{E:def rho_J}
\rho_{J_i+1} := \cdots := \rho_{J_{i+1}} := 2^{-i}A_p, \: i < k, \qtq{and} \rho_j:=2^{-k}A_p, \quad j > J_k
\end{equation}
and corresponding dyadic cubes.  (In the base case, $k=0$, the assumption follows from the definition of $A_p$.)  

Let $J_{k+1}>J_k$ be a sufficiently large integer, to be determined in a moment, and consider a function $f$ with $\|f\|_p = 1$.  By monotonicity of the remainder terms, the inductive step will be complete once we prove that for $J_{k+1}$ sufficiently large, the extension of a measurable $|h| \leq |r^{J_{k+1}}|$ obeys a better bound, $\|\scriptE h\|_q \leq 2^{-(k+1)}A_p$.  

By the maximality property of our cubes,
$$
\max_\tau \|r^{J_{k+1}} \chi_\tau \chi_{\{|f|<\rho_{J_{k+1}}^{-1} |\tau|^{-\frac1p}\}}\|_p \leq \min_{J_k < j \leq J_{k+1}}\|g^j\|_p.
$$
By construction, the $g_j$ have pairwise disjoint supports and $\sum_j |g_j| \leq |f|$.  Therefore
$$
(J_{k+1}-J_k)^{\frac1p} \min_{J_k < j \leq J_{k+1}}\|g_j\|_p \leq \bigl(\sum_{j=J_k+1}^{J_{k+1}}\|g_j\|_p^p\bigr)^{\frac1p} \leq \|f\|_p,
$$
so
\begin{equation} \label{E:max tau rJ}
\max_\tau \|r^{J_{k+1}} \chi_\tau \chi_{\{|f|<\rho_{J_{k+1}}^{-1} |\tau|^{-\frac1p}\}}\|_p \leq (J_{k+1}-J_k)^{-\frac1p}.
\end{equation}

Now let $L = L_k$ be an integer, sufficiently large that $B 2^{-c_0L} \leq 2^{-(k+1)}A_p$, with $B$ the implicit constant in \eqref{E:big bite}.  We may assume that $2^{-L} \leq 2^{-k}A_p = \rho_{J_{k+1}}$.  Let $|h| \leq |r^{J_{k+1}}|$ be a measurable function.  By \eqref{E:big bite}, 
$$
\|\scriptE h\|_q \leq B\max\{2^{-c_0L}, \max_{\tau\,\text{dyadic}} \|h \chi_\tau \chi_{\{|f|<2^{-L}|\tau|^{-\frac1p}\}}\|_p^\theta\}.
$$
Each dyadic $\tau$ is covered by at most $C(2^L\rho_{J_{k+1}}^{-1})^{pd}$ dyadic $\tau'$ whose diameter equals $(2^{-L}\rho_{J_{k+1}})^p$ times the diameter of $\tau$, so
$$
\max_{\tau\,\text{dyadic}} \|h \chi_\tau \chi_{\{|f|<2^{-L}|\tau|^{-\frac1p}\}}\|_p \lesssim (2^L \rho_{J_{k+1}}^{-1})^d\max_{\tau'\,\text{dyadic}}\|h\chi_{\tau'}\chi_{\{|h|<\rho_{J_{k+1}}|\tau'|^{-\frac1p}\}}\|_p.
$$
Thus
$$
\|\scriptE h\|_q \leq \max\{2^{-(k+1)}A_p, C_k (J_{k+1}-J_k)^{-\frac\theta p}\},
$$  
and taking $J_{k+1}$ sufficiently large completes the proof.  
\end{proof}

The proof of Lemma~\ref{L:big bite} is an adaptation of the argument of B\'egout--Vargas \cite{BV}, which was carried out there in the case $p=2$.  See also \cite{Bourgain98, Keraani, MVV} for earlier results in a similar vein.  

\begin{proof}[Proof of Lemma~\ref{L:big bite}]
Throughout the proof, we assume that $\|f\|_p = 1$.  

Tao's bilinear adjoint restriction theorem \cite{TaoParab} for the paraboloid states that
$$
\|\scriptE f_1 \scriptE f_2\|_r \lesssim \|f_1\|_2\|f_2\|_2,
$$
for $r > \frac{d+3}{d+1}$ and $f_1,f_2$ supported on cubes in $\scriptD_0$ that are separated by a distance of at least 1, while any valid $L^p_\xi \to L^q_{t,x}$ linear estimate trivially yields a bilinear estimate
$$
\|\scriptE f_1 \scriptE f_2\|_{\frac{q}2} \leq \|\scriptE f_1\|_{q}\|\scriptE f_2\|_{q} \leq A_{p}\|f_1\|_{p}\|f_2\|_{p}.
$$
Interpolating these two bilinear estimates and rescaling implies that for any nonendpoint pair $(p,q)$ for which \eqref{E:extn and rest} holds, there exists some $s < p$ such that 
\begin{equation} \label{E:Ls bilin}
\|\scriptE f_\tau \scriptE f_{\tau'}\|_{q/2} \lesssim 2^{2kd(\frac1s-\frac1p)}\|f_\tau\|_s\|f_{\tau'}\|_s,
\end{equation}
whenever $f_\tau,f_{\tau'}$ are supported on cubes $\tau,\tau' \in \scriptD_k$ separated by a distance of at least $2^{-k}$.  

Recalling \cite{TVV}, for $\tau, \tau' \in \scriptD_k$, we say that $\tau \sim \tau'$ if $C_d 2^{-k} \leq \dist(\tau,\tau') \leq 2C_d 2^{-k}$, with $C_d$ sufficiently large.  For every $\xi \neq \xi'$, there exist $k$ and $\tau \sim \tau' \in \scriptD_k$ such that $\xi \in \tau$ and $\xi' \in \tau'$.  Thus 
$$
\|\scriptE f\|_q^2 = \|\scriptE f \scriptE f\|_{\frac q2}  = \|\sum_k \sum_{\tau \sim \tau' \in \scriptD_k} \scriptE f_\tau \scriptE f_{\tau'}\|_{\frac q2},
$$
where $f_\tau = \chi_\tau f$, with $\chi_\tau$ a cutoff supported on $\tau$.  
The product $\scriptE f_\tau \scriptE f_{\tau'}$ has frequency support in
$$
\{(|\xi|^2+|\xi'|^2,\xi+\xi') : \xi \in \tau, \xi' \in \tau'\}, 
$$
which, for $C_d$ sufficiently large, is contained in a parallelepiped 
$$
R_{\tau,\tau'} \subseteq \{(\eta,\zeta): \zeta \in \tau+\tau', |\eta-\tfrac12|\zeta|^2| \sim 2^{-2k}\}.
$$
These parallelepipeds are finitely overlapping as $k$ and $\tau \sim \tau'$ vary, so by the almost orthogonality lemma (Lemma~6.1) of \cite{TVV},
$$
\|\scriptE f\|_q^2 \lesssim \bigl(\sum_k \sum_{\tau \sim \tau' \in \scriptD_k} \|\scriptE f_\tau \scriptE f_{\tau'}\|_{\frac q2}^t\bigr)^{\frac1t},
$$
where $t = \min\{\frac q2,(\frac q2)'\}$.  From our bilinear restriction inequality \eqref{E:Ls bilin},
$$
\|\scriptE f_\tau \scriptE f_{\tau'}\|_{\frac q2} \lesssim 2^{2kd(\frac1s-\frac1p)}\|f_\tau\|_s\|f_{\tau'}\|_s\lesssim 2^{2kd(\frac1s-\frac1p)}\|f_{\tau''}\|_s^2,
$$
where $\tau''$ is a slightly larger cube containing both $\tau$ and $\tau'$.  Thus, after reindexing,
\begin{equation} \label{E:extn^2}
\|\scriptE f\|_q^2 \lesssim \bigl(\sum_k \sum_{\tau \in \scriptD_k} 2^{2kdt(\frac1s-\frac1p)}\|f_{\tau}\|_s^{2t}\bigr)^{\frac1t}.
\end{equation}
Arithmetic shows that $2t > p$, and we recall that $p > s$.  

This completes the proof of Lemma~\ref{L:big bite}, modulo the inequality
$$
\bigl(\sum_k \sum_{\tau \in \scriptD_k} |\tau|^{-2t(\frac1s-\frac1p)}\|f_\tau\|_s^{2t}\bigr)^{\frac1{2t}} \lesssim \sup_{k \in \Z} \sup_{\tau \in \scriptD_k} \sup_{l \geq 0} 2^{-c_0l} \|f_{\tau,l}\|_p^\theta \|f\|_p^{1-\theta},
$$
which we take up in the next lemma.
\end{proof}

\begin{lemma} \label{L:Xs2t bound}
Assume that $\|f\|_p = 1$.  
If $2t > p > s$ and $\max\{\frac p{2t},\frac sp\} < \theta < 1$, then for some $c_0 > 0$,
\begin{equation} \label{E:Xs2t bound}
\bigl(\sum_k \sum_{\tau \in \scriptD_k} |\tau|^{-2t(\frac1s-\frac1p)}\|f_\tau\|_s^{2t}\bigr)^{\frac1{2t}} \lesssim \sup_{k \in \Z} \sup_{\tau \in \scriptD_k} \sup_{l \geq 0} 2^{-c_0l} \|f_{\tau,l}\|_p^\theta.
\end{equation}
\end{lemma}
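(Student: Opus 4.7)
The plan is to bound each $\|f_\tau\|_s^s$ via a level-set decomposition of $|f|$ on $\tau$, combine that with the hypothesis by geometric-mean interpolation, and finally sum over $\tau \in \scriptD_k$ and over $k \in \Z$. Denote the right-hand-side supremum by $M$, so the hypothesis reads $\|f_{\tau, l}\|_p \leq M^{1/\theta} 2^{c_0 l/\theta}$ for every $\tau$ and $l \geq 0$. For $\tau \in \scriptD_k$, set $\lambda_0 := |\tau|^{-1/p}$, form the annuli $A_l := \tau \cap \{2^{l-1}\lambda_0 \leq |f| < 2^l\lambda_0\}$ for $l \geq 1$, and apply Chebyshev annulus-by-annulus, $|A_l| \lesssim 2^{-lp}|\tau|\,\|f\chi_{A_l}\|_p^p$, to obtain
\[
\int_{A_l}|f|^s \leq (2^l\lambda_0)^s|A_l| \lesssim 2^{l(s-p)}|\tau|^{1-s/p}\|f\chi_{A_l}\|_p^p,
\]
while Hölder's inequality on $\{|f| < \lambda_0\}$ gives $\|f_{\tau, 0}\|_s^s \leq |\tau|^{1-s/p}\|f_{\tau, 0}\|_p^s$.

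The geometric-mean inequality now converts each $L^p$ factor into a combination of $M$ and $\|f_\tau\|_p$. From $\|f\chi_{A_l}\|_p \leq \min\{\|f_{\tau, l}\|_p, \|f_\tau\|_p\}$ we get, for any $\alpha \in (0, 1)$,
\[
\|f\chi_{A_l}\|_p^p \leq M^{p\alpha/\theta} 2^{c_0 lp\alpha/\theta}\|f_\tau\|_p^{p(1-\alpha)},
\]
and similarly $\|f_{\tau, 0}\|_p^s \leq M^{s(1-\beta)/\theta}\|f_\tau\|_p^{s\beta}$ for some $\beta \in (0, 1)$. Provided $c_0 < (p-s)\theta/(p\alpha)$, the geometric series $\sum_{l \geq 1} 2^{l(s-p+c_0p\alpha/\theta)}$ converges. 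Substituting back, raising to the $(2t/s)$-power, and multiplying by $|\tau|^{-a}$ (with $a = 2t(1/s - 1/p)$) yields
\[
|\tau|^{-a}\|f_\tau\|_s^{2t} \lesssim M^{2t(1-\beta)/\theta}\|f_\tau\|_p^{2t\beta} + M^{2tp\alpha/(s\theta)}\|f_\tau\|_p^{2tp(1-\alpha)/s}.
\]

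Taking $\beta$ slightly greater than $p/(2t)$ and $\alpha$ slightly less than $1 - s/(2t)$ is permitted by $\theta > \max\{p/(2t), s/p\}$, and forces both exponents on $\|f_\tau\|_p$ to exceed $p$. Summing over $\tau \in \scriptD_k$ via the bound $\sum_{\tau \in \scriptD_k}\|f_\tau\|_p^{r} \leq (\sup_{\tau \in \scriptD_k}\|f_\tau\|_p)^{r-p}\,\|f\|_p^p$ therefore pulls out a supremum that must decay as the cube-scale departs from the natural scale of $f$. The main obstacle is controlling this supremum in a scale-dependent way summable over $k$: I would do so by applying the hypothesis at $l = 0$ (which, combined with Chebyshev on the tail $\{|f| \geq \lambda_0\}$, forces $\sup_{\tau \in \scriptD_k}\|f_\tau\|_p$ to be small when $\lambda_0$ is small, i.e., when $|\tau|$ is large) and by applying the hypothesis at $l \approx \log_2(\|f\|_\infty|\tau|^{1/p})$ after truncating $f$ at height $\|f\|_\infty$ (which forces the sup to decay geometrically as $|\tau| \to 0$; the general case follows by approximation). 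Inserting these scale-dependent bounds turns the per-scale estimate into a geometric series in $k$ that sums to $\lesssim M^{2t}$, and the $(2t)$-th root completes the proof.
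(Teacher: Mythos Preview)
Your approach has a genuine gap at the final step, the sum over scales $k \in \Z$.  After summing over $\tau \in \scriptD_k$ via $\sum_\tau \|f_\tau\|_p^r \leq (\sup_{\tau} \|f_\tau\|_p)^{r-p}$, you are left, for each of your two terms, with something of the form
\[
M^{\cdots}\,\bigl(\sup_{\tau \in \scriptD_k}\|f_\tau\|_p\bigr)^{\gamma}, \qquad \gamma > 0,
\]
which you must now sum over $k$.  But $\sup_{\tau \in \scriptD_k}\|f_\tau\|_p$ is \emph{non-decreasing} as $k \to -\infty$ (the cubes are nested) and tends to $\|f\|_p = 1$; it does not decay.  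Your proposed fix for large $|\tau|$ --- the hypothesis at $l=0$ plus Chebyshev on $\{|f|\geq\lambda_0\}$ --- does not help: the hypothesis at $l=0$ bounds only the truncated piece $\|f_{\tau,0}\|_p$, not $\|f_\tau\|_p$, and the Chebyshev set $\{|f|\geq\lambda_0\}$ has measure at most $\lambda_0^{-p}$, which blows up rather than shrinks as $\lambda_0 \to 0$.  For a concrete obstruction, take $f = \chi_{[0,1]^d}$: then $\sup_{\tau\in\scriptD_k}\|f_\tau\|_p = 1$ for every $k \leq 0$, so your per-$k$ bound is constant on an infinite range of $k$ and the sum diverges.

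The paper avoids this by \emph{not} summing over $\tau$ first.  It uses H\"older to split off the supremum as a factor $(\sup_{k,\tau,l}\cdots)^{1-\theta}$, leaving a residual sum $\sum_l 2^{c_1 l}\sum_k\sum_{\tau}|\tau|^{\cdots}\|f_\tau^l\|_s^{2t\theta}$ that is then shown to be $\lesssim 1$ via Fubini: writing it as an integral in $\xi$ with an inner sum over $k$, the level-set constraint $|f(\xi)|\sim 2^l|\tau|^{-1/p}$ (or $|f(\xi)|<|\tau|^{-1/p}$ when $l=0$) restricts, \emph{pointwise in $\xi$}, the scales $k$ that contribute, and that inner sum becomes a convergent geometric series whose value is comparable to a power of $|f(\xi)|$.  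It is exactly this pointwise link between $|f(\xi)|$ and the admissible scales that your $\tau$-by-$\tau$ estimate discards.
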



\begin{proof}[Proof of Lemma~\ref{L:Xs2t bound}]
We will prove the slightly stronger (since $|f_\tau^l| \leq |f_{\tau,l}|$) inequality
\begin{equation} \label{E:strong Xs2t bound}
\bigl(\sum_k \sum_{\tau \in \scriptD_k} |\tau|^{-2t(\frac1s-\frac1p)}\|f_\tau\|_s^{2t}\bigr)^{\frac1{2t}} \lesssim \sup_{k \in \Z} \sup_{\tau \in \scriptD_k} \sup_{l \geq 0} 2^{-c_0l} \|f_\tau^l\|_p^\theta,
\end{equation}
where the $f_{\tau}^l$ are defined inductively by $f_\tau^0 := f_{\tau,0}$ and $f_\tau^l := f_{\tau,l} - f_\tau^{l-1}$, $l \geq 1$.  

For any $c_1 > 0$, by using disjointness of the supports of the $f_\tau^l$, then H\"older, then two more applications of H\"older (together with summability of $2^{-\frac{c_1s}2(2t-s)}$),
\begin{align} \notag
&\sum_k \sum_{\tau \in \scriptD_k} |\tau|^{-2t(\frac1s-\frac1p)}\|f_\tau\|_s^{2t} 
\lesssim \sum_k \sum_{\tau \in \scriptD_k} |\tau|^{-2t(\frac1s-\frac1p)}\bigl(\sum_{l \geq 0} \|f_\tau^l\|_s^s\bigr)^{\frac{2t}s}\\ \notag
&\quad \lesssim \sup_k \sup_{\tau \in \scriptD_k} \sup_{l \geq 0} \bigl(2^{-\frac{c_1}{2(1-\theta)} l}|\tau|^{-2t(\frac1s-\frac1p)}\|f_\tau^l\|_s^{2t}\bigr)^{1-\theta} \\\notag
&\qquad\qquad\qquad\qquad\qquad \times \sum_k \sum_{\tau \in \scriptD_k} |\tau|^{-2t\theta(\frac1s - \frac1p)}\bigl(\sum_{l \geq 0} 2^{\frac{c_1s}{4t}l}\|f_\tau^l\|_s^{s\theta}\bigr)^{\frac{2t}s}\\ \label{E:l outside}
& \quad \lesssim \sup_{k \in \Z} \sup_{\tau \in \scriptD_k}\sup_{l \geq 0} \bigl(2^{-c_0l}\|f_\tau^l\|_p^{2t}\bigr)^{1-\theta} \bigl(\sum_{l \geq 0} 2^{c_1l} \sum_{k \in \Z} 2^{2kdt\theta(\frac1s-\frac1p)}\sum_{\tau \in \scriptD_k}\|f_\tau^l\|_s^{2t\theta}\bigr),
\end{align}
where $c_0 = \frac{c_1}{2(1-\theta)}$.  Fix $l \geq 0$.  It remains to obtain geometric decay in $l$ of the sum over $k,\tau$ on the right side of \eqref{E:l outside}.

We begin with the case $l = 0$.  Using H\"older's inequality, $2t\theta > s$, Fubini, and $2t\theta > p$,
\begin{align*}
&\sum_k \sum_{\tau \in \scriptD_k} |\tau|^{-2t\theta(\frac1s-\frac1p)}\|f_\tau^0\|_s^{2t\theta} \leq \sum_k \sum_{\tau \in \scriptD_k} |\tau|^{\frac{2t\theta}p-1}\|f_\tau^0\|_{2t\theta}^{2t\theta}\\
&\qquad = \int \bigl(\sum_{k:2^{-k}<|f|^{-\frac pd}} 2^{-kd(\frac{2t\theta}p-1)}\bigr)|f|^{2t\theta}\, d\xi \sim \int |f|^p\, d\xi \sim 1.
\end{align*}

Now we turn to the case $l \geq 1$.  Since $2t\theta > s$
\begin{align*}
&\sum_k \sum_{\tau \in \scriptD_k} |\tau|^{-2t\theta(\frac1s-\frac1p)}\|f_\tau^l\|_s^{2t\theta} \leq \bigl(\sum_k \sum_{\tau \in \scriptD_k} 2^{dks(\frac1s-\frac1p)}\|f_\tau^l\|_s^s\bigr)^{\frac{2t\theta}s}\\
&\qquad = \bigl(\sum_k 2^{dks(\frac1s-\frac1p)}\int_{\{|f|\sim 2^{l+\frac{kd}p}\}} |f|^s\, d\xi\bigr)^{\frac{2t\theta}s} \sim 2^{-l \frac{(p-s)2t\theta}s}.
\end{align*}
Thus we can sum in $l$ on the right side of \eqref{E:l outside} provided $0 < c_1 < \frac{2(p-s)t\theta}s$.  
\end{proof}


\section{Frequency localization}


By Proposition~\ref{P:freq nibbles}, for each $0 < A \leq A_p$ and $\eps>0$, there exists an integer $J$ such that for all $0 \neq f \in L^p$ with $\|\scriptE f\|_q \geq A \|f\|_p$, $f = \sum_{j=1}^J g^j + r^J$, where the remainder $r^J$ contributes an $\eps$ portion of the extension, $\|\scriptE r^J\|_q \leq \eps\|f\|_p$, and the $g^j$ are $\eps$-well-adapted to dyadic cubes $\tau^j$, in the sense that $\supp g^j \subseteq \eps^{-1}\tau^j$ and $|g^j| \leq \eps^{-1}|\tau^j|^{-\frac1p}\|f\|_p$.  Our next task is to control these cubes.  The following proposition states that in the special case $A = A_p$, we can take $j$ to be 1, and the cube to be independent of $\eps$.   (This is easily seen to be false for other values of $A$.)

We recall from the introduction that $\tilde \scriptS_p$ denotes the subgroup of the group $\scriptS_p$ of symmetries of $\scriptE:L^p_\xi \to L^q_{t,x}$ generated by the dilations, the frequency translations, and the space-time translations.    

\begin{proposition} \label{P:freq localization}
For each $\eps > 0$, there exist $\delta > 0$ and $R < \infty$ such that for all $f \in L^p$ satisfying $\|\scriptE f\|_q > (A_p-\delta)\|f\|_p$, there exists a symmetry $S \in \tilde\scriptS_p$ such that 
$$
\|S f\|_{L^p(\{|\xi|>R\} \cup \{|Sf|>R\|f\|_p\})} < \eps \|f\|_p.
$$
The symmetry $S$ may be chosen to depend only on $f$, and not on $\eps$.
\end{proposition}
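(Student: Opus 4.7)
The plan is to invoke Proposition~\ref{P:freq nibbles} to decompose $f$ into bounded, compactly supported ``nibbles'' plus a remainder whose extension is small, then use strict convexity arising from $q>p$ to argue that essentially a single nibble carries all of the $L^p$ mass, and finally apply the symmetry adapted to that nibble's dyadic cube.

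Normalizing $\|f\|_p=1$, for given $\eps>0$ I would apply Proposition~\ref{P:freq nibbles} with $J=J(\eps)$ large enough that $\rho_J\leq\eps$, yielding $f=\sum_{j=1}^{J}g^j+r^J$ with $\supp g^j\subseteq\tau^j$, $|g^j|\leq\rho_j^{-1}|\tau^j|^{-1/p}$, pairwise disjoint supports, and $\|\scriptE r^J\|_q\leq\rho_J$. Disjointness forces $\sum_j\|g^j\|_p^p\leq 1$; the triangle inequality in $L^q$ combined with $\|\scriptE g^j\|_q\leq A_p\|g^j\|_p$ and $\|\scriptE f\|_q\geq A_p-\delta$ gives
\[
\sum_j\|g^j\|_p\geq 1-\tfrac{\delta+\rho_J}{A_p}.
\]

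The principal obstacle is to upgrade this $\ell^1$-lower bound to concentration on a single nibble, $M:=\max_j\|g^j\|_p\geq 1-\eps$. The bare constraints $\sum a_j^p\leq 1$ and $\sum a_j\geq 1-\eta$ do not on their own force $\max a_j$ near $1$ (witness $a_1=a_2=2^{-1/p}$), so one must genuinely use the strict subadditivity of $\scriptE$ afforded by $q>p$. The natural mechanism is an $\ell^q$-decoupling for distinct-cube nibbles,
\[
\|\scriptE f\|_q^q\leq A_p^q\sum_j\|g^j\|_p^q+o_{\delta,\rho_J}(1),
\]
whose right-hand side is bounded by $A_p^qM^{q-p}\sum_j\|g^j\|_p^p\leq A_p^qM^{q-p}$, forcing $M\geq (1-\delta/A_p-o(1))^{q/(q-p)}\to 1$. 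I would establish the required decoupling by a compactness/contradiction argument: were it to fail along a sequence $f_n$, the canonically rescaled nibbles would be uniformly $L^\infty$-bounded on uniformly compact supports, hence precompact in $L^2$; an $L^2$-profile decomposition would then produce genuinely distinct asymptotically-orthogonal weak limits of rescaled nibbles, whose $L^q$-extensions decouple by the $L^2$-based machinery (Plancherel plus bilinear/stationary-phase estimates).

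Given $M\geq 1-\eps$ for some dominant index $j_0$ (chosen canonically, e.g.\ as the first index achieving the maximum in the infinite-nibble process, so depending only on $f$), let $S\in\tilde\scriptS_p$ be the composition of the frequency translation centering $\tau^{j_0}$ at the origin with the dilation rescaling $\tau^{j_0}$ to the unit cube. Then $Sg^{j_0}$ is supported on the unit cube and $|Sg^{j_0}|\leq\rho_{j_0}^{-1}$; one verifies from the structure of the sequence $\{\rho_j\}$ that the relevant $\rho_{j_0}^{-1}$ is controlled in terms of $\eps$ alone. Choosing $R$ accordingly, $Sg^{j_0}\equiv 0$ on $\{|\xi|>R\}$ and $|Sg^{j_0}|\leq R/2$, so on $\{|Sf|>R\}$ we have $|Sf-Sg^{j_0}|\geq R/2$. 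A Chebyshev-type manipulation combined with the $L^p$-isometry of $S$ then yields
\[
\|Sf\|_{L^p(\{|\xi|>R\}\cup\{|Sf|>R\})}\lesssim\|f-g^{j_0}\|_p=(1-\|g^{j_0}\|_p^p)^{1/p}\lesssim\eps^{1/p},
\]
which finishes the proof after relabeling $\eps$.
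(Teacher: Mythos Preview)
Your overall strategy---decompose via Proposition~\ref{P:freq nibbles}, then use $q>p$ convexity to concentrate on a single cube---matches the paper's, but the key decoupling step has a genuine gap.

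The inequality you want,
\[
\|\scriptE f\|_q^q \leq A_p^q\sum_{j=1}^J \|g^j\|_p^q + o_{\delta,\rho_J}(1),
\]
is false as stated. The nibbles $g^1,\dots,g^J$ have disjoint \emph{supports} in $\xi$, but nothing forces their cubes $\tau^j$ to be mutually separated in scale or location; in fact several $\tau^j$ can essentially coincide (the construction in Proposition~\ref{P:freq nibbles} only cuts by height thresholds, so the same cube can be chosen repeatedly). For such nibbles there is no asymptotic parameter and hence no decoupling of their extensions. Your proposed contradiction argument does not repair this: along a bad sequence $f_n$ the nibbles $g_n^j$, $g_n^{j'}$ may sit on cubes that remain at bounded relative scale and distance for all $n$, so after any rescaling they share a common nontrivial weak limit and are \emph{not} asymptotically orthogonal. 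The $L^2$ profile decomposition cannot manufacture orthogonality that is not already present in the geometry of the cubes.

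The paper avoids this by decoupling only \emph{two} pieces rather than all $J$. It first observes (from the proof of Proposition~\ref{P:freq nibbles}, via Lemma~\ref{L:big bite}) that $\|g_n^1\|_p \gtrsim 1$, normalizes $\tau_n^1$ to the unit cube, and then partitions the remaining indices into ``good'' (cube parameters stay bounded) and ``bad'' (parameters diverge). The good sum $G_n$ is automatically localized near the unit cube with bounded height. For the bad sum $B_n$ one shows $\scriptE B_n \to 0$ a.e.\ along a subsequence---using $L^1$ or $L^{p_0}$ convergence when the scale degenerates, and a local smoothing/Rellich argument when only the center escapes---and then Br\'ezis--Lieb gives $\|\scriptE F_n\|_q^q = \|\scriptE G_n\|_q^q + \|\scriptE B_n\|_q^q + o(1)$. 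The convexity step then reads
\[
A_p - \rho \leq \bigl(\|\scriptE G_n\|_q^q + \|\scriptE B_n\|_q^q\bigr)^{1/q} \leq A_p\max\{\|G_n\|_p,\|B_n\|_p\}^{1-p/q},
\]
and since both $\|G_n\|_p$ and $\|B_n\|_p$ are bounded away from $1$ under the failure hypothesis, this contradicts near-extremality. The upshot is that you should not aim to show a \emph{single} nibble dominates; rather, the nibbles whose cubes cluster near $\tau^1$ together carry almost all the mass, and that cluster is already localized.

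A secondary issue: your argument that $S$ depends only on $f$ is incomplete. The index $j_0$ maximizing $\|g^j\|_p$ over $j\le J(\eps)$ can shift as $\eps$ decreases, and the bound $|Sg^{j_0}|\le \rho_{j_0}^{-1}$ gives $R$ depending on $j_0$, hence on $\eps$. The paper handles this separately with a short argument showing that once \emph{some} symmetry works for a given $\eps$, the one adapted to the first big cube (from Lemma~\ref{L:big bite}) works for a controlled enlargement of $R$.
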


\begin{proof}[Proof of Proposition~\ref{P:freq localization}]
We begin with the \textit{post hoc} deduction of the independence of the symmetry from $\eps$.  We fix a function $f\in L^p_\xi$, which we may assume has $\|f\|_p = 1$ and $\|\scriptE f\|_p \geq \tfrac12A_p$.  By Lemma~\ref{L:big bite}, there exists a dyadic cube $\tau$ with 
\begin{equation} \label{E:f0}
\|f_0\|_p := \|f \chi_\tau \chi_{\{|f| \lesssim |\tau|^{-\frac1p}\}}\|_p \gtrsim 1.
\end{equation}
By applying a symmetry to $f$, we may assume that $\tau$ is the unit cube.  Now suppose that another symmetry, $Sf$ were to satisfy
\begin{equation} \label{E:f on X}
\|Sf\|_{L^p(\{|\xi|>R\} \cup \{|Sf|>R\})} < \eps,
\end{equation}
for some sufficiently small $\eps$.  We will show that that \eqref{E:f on X} holds with $S$ equal to the identity and $R$ replaced by some slightly larger $R'$ (given below).    As modulations leave the absolute value invariant, we may assume that $Sf = \lambda^{\frac dp}f(\lambda(\cdot-\xi_0))$.  Since $|Sf_0| \leq |Sf|$,
\begin{equation} \label{E:f0 on X}
\|f_0\|_{L^p(\{|\xi+\lambda\xi_0|>\lambda R\} \cup \{|f_0|>\lambda^{-\frac dp}R\})} = \|Sf_0\|_{L^p(\{|\xi|>R\} \cup \{|Sf_0|>R\})} < \eps.
\end{equation}
Hence by \eqref{E:f0} and the triangle inequality, $\|f_0\|_{L^p(\bigcap_{j=1}^4 E_j)} \sim 1$, where 
$$
E_1:=\tau, \quad E_2:=\{|f|\lesssim 1\}, \quad E_3:=\{|\xi+\lambda\xi_0|<\lambda R\}, \quad E_4:=\{|f|<\lambda^{-\frac dp}R\}.
$$
By H\"older's inequality, $\|f_0\|_{L^p(E_1 \cap E_4)} \lesssim \lambda^{-\frac dp}R$ and $\|f_0\|_{L^p(E_2 \cap E_3)} \lesssim (\lambda R)^{\frac1p}$, so $R^{-1} \lesssim \lambda \lesssim R^{\frac pd}$.  Since $E_1 \cap E_3 \neq\emptyset$, $|\xi_0| \lesssim R+\lambda^{-1}$.  Therefore
$$
\{|\xi+\lambda\xi_0|>\lambda R\} \supseteq \{|\xi|> CR^{\frac pd+1}\} \qtq{and} \{|f|>\lambda^{-\frac dp}R\} \supseteq \{|f|>CR^{\frac dp+1}\}.
$$
Inequality \eqref{E:f on X} (see also \eqref{E:f0 on X}) thus implies that
$$
\|f\|_{L^p(\{|\xi|>R'\} \cup \{|f|>R'\})},
$$
with $R' = CR^{\frac dp+\frac pd + 1}$.  

Now we turn to the proof of the main conclusion of the proposition.  

Were the proposition to fail, there would exist $\eps > 0$ and a sequence $\{f_n\} \subseteq L^p_\xi$, satisfying $\|f_n\|_{L^p_\xi} \equiv 1$ and $\|\scriptE f_n\|_{L^q_{t,x}} \to A_p$, but such that 
\begin{equation} \label{E:fn not local}
\liminf \|S_n f_n\|_{L^p(\{|\xi|>n\} \cup \{|S_nf_n|>n\})} > \eps,
\end{equation}
for every sequence $\{S_n\} \subseteq \tilde\scriptS_p$ of symmetries of $\scriptE$.

By Proposition~\ref{P:freq nibbles}, there exist $J \in \N$ and dyadic cubes $\tau_n^j$, $n \in \N$, $1 \leq j \leq J$, such that if we inductively define 
$$
r_n^0:=f_n, \quad g_n^j:=r_n^{j-1} \chi_{\tau_n^j}\chi_{\{|f_n|<C\rho_j^{-1}|\tau_n^j|^{-\frac1p}\}}, \quad r_n^j:=r_n^{j-1}-g_n^j, \quad 1 \leq j \leq J,
$$
then for all $n$ and all functions $|h_n^J| \leq |r_n^J|$,
$$
\|\scriptE h_n^J\|_q < \rho,
$$
with $\rho$ to be determined in a moment.  

Let $F_n:=f_n-r_n^J$.  By our hypotheses and the disjointness of the supports of $F_n$ and $r_n^J$,
\begin{align*}
A_p-\rho& \leq \liminf \|\scriptE F_n\|_q \leq \liminf A_p\|F_n\|_p \leq \liminf A_p(1-\|r_n^J\|_p^p)^{\frac1p} \\ & \leq A_p-c_p \limsup \|r_n^J\|_p^p,
\end{align*}
so, after passing to a subsequence, we may assume that for all $n$, $\|r_n^J\|_p \lesssim \rho^{\frac1p}$, which for $\rho$ sufficiently small implies
\begin{equation} \label{E:fn-Fn}
\|f_n-F_n\|_p < \tfrac\eps2.  
\end{equation}

Since $(f_n)$ is extremizing, for each sufficiently large $n$, $\|g_n^1\|_p \gtrsim 1$.  Indeed, that $\|g_n^1\|_p \ll 1$ implies $\|\scriptE f_n\|_q \ll 1$ follows from the proof of Proposition~\ref{P:freq nibbles}.  (See \eqref{E:max tau tau'}, in particular.)   Applying symmetries if needed, we may assume that $\tau_n^1 = [0,1]^d$ for all $n$.  The remaining cubes may be written
$$
\tau_n^j = \xi_n^j + 2^{-k_n^j}[0,1]^d, \qquad n \in \N, \quad 1 \leq j \leq J,
$$
with $k_n^j \in \Z$ and $\xi_n^j \in 2^{-k_n^j}\Z^d$.  

Passing to a subsequence, we may assume that for each $j$, either $k_n^j$ remains bounded or $|k_n^j| \to \infty$ and that either $\xi_n^j$ remains bounded or $|\xi_n^j| \to \infty$.  Since our $\tau_n^j$ are dyadic (and $j \leq J < \infty$), if $k_n^j$ and $\xi_n^j$ both remain bounded, after passing to a further subsequence, they are constant in $n$.  We say that an index $1 \leq j \leq J$ is good if the parameters $k_n^j$ and $\xi_n^j$ are constant in $n$, and that it is bad otherwise.  We decompose 
$$
F_n = G_n+B_n, \qquad G_n := \sum_{j \: \text{good}} g_n^j, \qquad B_n:=F_n-G_n.
$$
It follows from our hypothesis \eqref{E:fn not local} and the estimate \eqref{E:fn-Fn} that $\liminf \|B_n\|_p > \tfrac\eps2$, so, after passing to a subsequence, 
\begin{equation} \label{E:Gn small}
\|G_n\|_p \leq (1-(\eps/2)^p)^{\frac1p} \leq 1-c\eps^p.
\end{equation}
Since $1 \lesssim \|g_n^1\|_p\leq\|G_n\|_p$, 
\begin{equation} \label{E:Bn small}
\|B_n\|_p = \bigl(\|F_n\|_p^p-\|G_n\|_p^p\bigr)^{\frac1p}  \leq 1-c_0,
\end{equation}
for some $c_0 \gtrsim 1$.  

We claim that after passing to a subsequence, $(\scriptE B_n)$ converges to zero a.e.  Indeed, $B_n=\sum_{bad\,j \leq J} g_n^j$, so it suffices to prove that a subsequence of each bad $\scriptE g_n^j$ tends to zero a.e., as $n \to \infty$.  If $k_n^j \to \infty$, then $g_n^j \to 0$ in $L^1_\xi$, so $\scriptE g_n^j \to 0$ uniformly.  If $k_n^j \to -\infty$, then $g_n^j \to 0$ in $L^{p_0}_\xi$, so $\scriptE g_n^j \to 0$ in $L^{q_0}_{t,x}$.  In the remaining case, $k_n^j$ is bounded, but $|\xi_n^j| \to \infty$.  Thus $g_n^j$ is bounded in $L^2_\xi$ and $\scriptE g_n^j \rightharpoonup 0$ weakly.  By the Rellich--Kondrashov compactness theorem and the local smoothing estimate \cite{ConstantinSautJAMS88, SjolinDuke87, VegaPAMS88}
$$
\iint |(|\nabla_x|^{\frac12}+|\partial_t|^{\frac14}) \scriptE g_n^j(t,x)|^2 |\phi(t,x)|\, dt\, dx \lesssim_\phi \|g_n^j\|_2 \lesssim 1, \qquad \phi \in \scriptS(\R^{1+d}),
$$
a subsequence of $\scriptE g_n^j$ converges to some function $H$ in $L^2_{\rm{loc}}$.  As $\scriptE B_n$ converges weakly to zero, $H \equiv 0$. 

By a result of Br\'ezis--Lieb \cite{BL}, the a.e.\ convergence to zero of $(\scriptE B_n)$ implies that
$$
\lim_{n \to \infty} \|\scriptE F_n\|_q^q - \|\scriptE G_n\|_q^q - \|\scriptE B_n\|_q^q = 0.
$$
Thus by \eqref{E:Gn small}, our hypothesis that $(f_n)$ is an $L^p_\xi$-normalized extremizing sequence, \eqref{E:fn-Fn}, \eqref{E:Gn small}, \eqref{E:Bn small}, and the fact that $q > p$,
\begin{align*}
A_p-\rho &\leq \liminf \|\scriptE F_n\|_q = \liminf \bigl(\|\scriptE G_n\|_q^q + \|\scriptE B_n\|_q^q\bigr)^{\frac1q} \\
& \leq A_p \bigl(\|G_n\|_p^q + \|B_n\|_p^q\bigr)^{\frac1q} \leq A_p \max\{1-c\eps^p,1-c_0\}^{1-\frac pq}\|F_n\|_p^{\frac1q} \\
&\leq A_p \max\{1-c\eps^p,1-c_0\}^{1-\frac pq}.
\end{align*}
Choosing $\rho$ sufficiently small gives a contradiction.  (This approach via Br\'ezis--Lieb and local smoothing is due to Killip--Vi\c{s}an, \cite{ClayNotes}.)
\end{proof}


\section{Space-time localization}


In the previous sections, we used the bilinear theory to prove that near-extremizers have good frequency localization modulo symmetries.  In this section, we take a first step toward localization in spacetime by applying the $L^2_\xi$ theory to prove an $L^p_\xi$-based profile decomposition for frequency localized sequences.  

\begin{proposition} \label{P:Lp profile}
Let $R > 0$ and let $(f_n)$ be a sequence of measurable functions, supported on $\{|\xi|<R\}$, and satisfying $|f_n| < R$.  After passing to a subsequence, there exist $J_0 \in \N \cup \{\infty\}$, $(t_n^j,x_n^j) \in \R^{1+d}$, bounded, measurable functions $\phi^j$ supported on $\{|\xi|< R\}$, and remainders $r_n^J$ such that for each $J < J_0$, $f_n = \sum_{j=1}^J e^{i(t_n^j,x_n^j)(|\xi|^2,\xi)}\phi^j + r_n^J$, and
\begin{itemize}
\item[\rm{(i)}]  For all $j \neq j'$, $\lim_{n \to \infty} \bigl(|t_n^j-t_n^{j'}| + |x_n^j-x_n^{j'}|\bigr) = \infty$,
\item[\rm{(ii)}]  If $\tilde p:=\max\{p,p'\}$, then $\liminf_{n \to \infty}\bigl(\|f_n\|_p - \bigl(\sum_{j=1}^{J_0}\|\phi^j\|_p^{\tilde p}\bigr)^{\frac1{\tilde p}}\bigr) \geq 0$,  
\item[\rm{(iii)}] For $J < J_0$, $\lim_{n \to \infty} \bigl(\|\scriptE f_n\|_q^q - \sum_{j=1}^J \|\scriptE \phi^j\|_q^q - \|\scriptE r_n^J\|_q^q\bigr) = 0$,
\item[\rm{(iv)}] The extensions of the errors tend to zero:  $\lim_{J \to J_0} \limsup_{n \to \infty} \|\scriptE r_n^J\|_q = 0$,
\item[\rm{(v)}]  For all $j$, $\phi^j = \wklim_{n \to \infty} e^{-i(t_n^j,x_n^j)(|\xi|^2,\xi)}f_n$.
\end{itemize}
\end{proposition}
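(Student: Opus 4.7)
The plan is to reduce to the standard $L^2_\xi$-based linear profile decomposition (as in \cite{BV,CarlesKeraani,MerleVega}) applied to $(f_n)$ and then translate its conclusions into the $L^p_\xi$-statement above.  Since the $f_n$ have frequency support in $\{|\xi|<R\}$ and obey $|f_n|<R$, they are uniformly bounded in every $L^s_\xi$, $s \in [1,\infty]$, and in particular in $L^2_\xi$.  The $L^2$-profile decomposition then yields, after passing to a subsequence, $f_n = \sum_{j=1}^J g_n^j \psi^j + w_n^J$, where each $g_n^j \in \tilde\scriptS_2$ is a composition of a dilation, a frequency translation, and a spacetime modulation, with asymptotic $L^2$-orthogonality of the profiles and $\lim_{J \to \infty}\limsup_{n}\|\scriptE w_n^J\|_{q_2} = 0$ for $q_2 := \tfrac{2(d+2)}d$.

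The next step is to argue that, after discarding trivial profiles, each surviving $g_n^j$ reduces to a spacetime modulation $f \mapsto e^{i(t_n^j,x_n^j)(|\cdot|^2,\cdot)}f$.  The profile $\psi^j$ is characterized as the weak $L^2$-limit of $(g_n^j)^{-1}f_n$, so if the dilation parameter $\lambda_n^j$ goes to $0$ or $\infty$, or the frequency translation $|\xi_n^j|$ goes to $\infty$, then the support measure of $(g_n^j)^{-1}f_n$ goes to $0$, or its $L^\infty$-norm goes to $0$, or its support escapes to infinity; in each case the weak $L^2$-limit vanishes.  Reindexing gives $f_n = \sum_{j=1}^J e^{i(t_n^j, x_n^j)(|\cdot|^2,\cdot)}\phi^j + r_n^J$; property (v) is built into the construction (and $\phi^j$ inherits the frequency support and pointwise bound as a weak limit of shifts of $f_n$), while (i) follows from the orthogonality of the profile parameters in the $L^2$-decomposition.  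For (iii), an iterated Br\'ezis--Lieb argument in spacetime delivers the $L^q$-decoupling once one knows that for $j \neq j'$ the translated extension $\scriptE (e^{i(t_n^j,x_n^j)(|\cdot|^2,\cdot)}\phi^j)$, re-centered at $(t_n^{j'},x_n^{j'})$, converges a.e.\ to zero; this a.e.\ vanishing comes from local smoothing plus Rellich--Kondrashov, as in the proof of Proposition~\ref{P:freq localization}.  Property (iv) then follows by interpolating the $L^{q_2}$-smallness of $\scriptE r_n^J$ against uniform bounds on $\scriptE r_n^J$ in other $L^{q(s)}$ spaces, which are available since $r_n^J$ is uniformly bounded in every $L^s_\xi$.

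The main obstacle is (ii), since $L^p$-decoupling is not automatic from the $L^2$-based decomposition when $p \neq 2$.  The plan is to approximate each modulated profile via a Schr\"odinger-projection-type operator $P_n^j f := \scriptR[\chi_n^j \scriptE f]$, where $\chi_n^j$ is a smooth cutoff to a large spacetime ball centered at $-(t_n^j, x_n^j)$, chosen so that $P_n^j f_n$ approximates $e^{i(t_n^j, x_n^j)(|\cdot|^2,\cdot)} \phi^j$ in $L^p_\xi$ as $n \to \infty$, with the cross-terms for $j \neq j'$ vanishing by property (i).  The vector-valued bound of Lemma~\ref{L:Schrodinger projections}, which asserts that $f \mapsto (P_n^j f)_j$ maps $L^p_\xi \to \ell^{\tilde p}(L^p_\xi)$ uniformly in $n$ (the exponent $\tilde p = \max\{p,p'\}$ arising from interpolating the self-dual case $p=2$ with the endpoint case where disjointness of the $\chi_n^j$ yields an $L^p$-triangle identity), then transfers the uniform $L^p_\xi$-bound on $(f_n)$ into the desired $\ell^{\tilde p}$-bound on $(\|\phi^j\|_p)$.
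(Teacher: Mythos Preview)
Your overall architecture matches the paper's: apply the $L^2$ profile decomposition, use the support/size constraints to reduce the symmetries to pure spacetime modulations (yielding (i) and (v)), get (iii) from Br\'ezis--Lieb together with local smoothing, and (iv) by interpolating the $L^{q_2}$-smallness of $\scriptE r_n^J$ against a uniform $L^{q_1}$ bound.  (One small remark on (iv): you need the bound on $\|\scriptE r_n^J\|_{q_1}$ to be uniform in $J$ as well as in $n$; the paper secures this by running Br\'ezis--Lieb at exponent $q_1$, so that $\|\scriptE r_n^J\|_{q_1}^{q_1}\le \|\scriptE f_n\|_{q_1}^{q_1}+o_n(1)$, rather than by bounding $r_n^J$ in $L^s_\xi$ term by term, which would give a $J$-dependent constant.)

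The real discrepancy is in (ii).  You propose a spacetime-side projection $P_n^j f := \scriptR[\chi_n^j \scriptE f]$ and then invoke Lemma~\ref{L:Schrodinger projections}.  But that lemma is \emph{not} about your $P_n^j$; its operators $\pi_n^j$ act purely on the frequency side---demodulate by $e^{-i(t_n^j,x_n^j)(|\cdot|^2,\cdot)}$, multiply by a bump $\phi$ with $0\le\phi\le1$, convolve with $\psi$ with $\int\psi=1$, remodulate.  This construction is chosen precisely so that each $\pi_n^j$ is a contraction on \emph{every} $L^p_\xi$, $1\le p\le\infty$; the interpolation in the lemma runs between the trivial $\ell^\infty$ bound (with constant $1$) at $p=1,\infty$ and the $\ell^2$ bound at $p=2$ (proved via $TT^*$ and stationary phase), producing the $\ell^{\tilde p}$ bound with constant exactly $1$.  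Your $P_n^j$ does not enjoy this: $\scriptR$ only maps $L^{q'}_{t,x}\to L^{p'}_\xi$, so even bounding a single $P_n^j$ on $L^p_\xi$ requires ad hoc use of the compact frequency support and introduces constants depending on $R$ and on the scale of the spacetime cutoff.  More seriously, your proposed endpoint---``disjointness of the $\chi_n^j$ yields an $L^p$-triangle identity''---gives $\sum_j\|\chi_n^j\scriptE f\|_q^q\le\|\scriptE f\|_q^q$, a decoupling in $L^q_{t,x}$, which does not pass through $\scriptR$ to an $\ell^p$ bound on the frequency-side norms $\|P_n^j f\|_p$.  So the interpolation you sketch does not close; the repair is exactly the paper's frequency-side projections.
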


Before beginning the proof, we recall the $L^2_\xi$-based profile decomposition for $\scriptE$.  

\begin{theorem}[\cite{BV, CarlesKeraani, MerleVega}] \label{T:L2 profile}
Let $\{f_n\}$ be a bounded sequence in $L^2_\xi$.  After passing to a subsequence, there exist $J_0 \in \N \cup \{\infty\}$, symmetries $S_n^j \in \tilde\scriptS_2$, nonzero profiles $\phi^j \in L^2_\xi$, and errors $r_n^J \in L^2_\xi$, such that for each $J < J_0$, $f_n = \sum_{j=1}^J S_n^j \phi^j + r_n^J$, and
\begin{itemize}
\item[\emph{(i)}] For all $j \neq j'$, $(S_n^j)^{-1}S_n^{j'} \rightharpoonup 0$ in the weak operator topology,
\item[\emph{(ii)}] For $J < J_0$, $\lim_{n \to \infty} \bigl(\|f_n\|_2^2 - \sum_{j=1}^J \|\phi^j\|_2^2 - \|r_n^J\|_2^2\bigr) = 0$
\item[\emph{(iii)}] For $J < J_0$, $\lim_{n \to \infty} \bigl(\|\scriptE f_n\|_{q_2}^{q_2} - \sum_{j=1}^J \|\scriptE \phi^j\|_{q_2}^{q_2} - \|\scriptE r_n^J\|_{q_2}^{q_2}\bigr) = 0$,
\item[\emph{(iv)}] For all $j$, $\phi^j = \wklim (S_n^j)^{-1} f_n$, 
\item[\emph{(v)}]  The extensions of the errors tend to zero:  $\lim_{J \to J_0} \limsup_{n \to \infty} \|\scriptE r_n^J\|_{q_2} = 0$.
\end{itemize}
\end{theorem}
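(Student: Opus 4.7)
The plan is to apply Theorem~\ref{T:L2 profile} to $(f_n)$ as a black box---legitimate since $|f_n|<R$ and $\supp f_n\subseteq\{|\xi|<R\}$ force $\|f_n\|_2\lesssim R^{1+d/2}$---and then reduce the $L^2$ symmetries it produces to pure spacetime modulations, so that the $L^2$-based decoupling can be upgraded to the $L^p$ and $L^q$ statements. Parametrize each $S_n^j\in\tilde\scriptS_2$ as a spacetime modulation $e^{i(t_n^j,x_n^j)(|\xi|^2,\xi)}$ followed by a frequency translation by $\xi_n^j$ and an $L^2$-normalized dilation by $\lambda_n^j$. Then $(S_n^j)^{-1}f_n$ has support in a ball of radius $\lambda_n^j R$ about $-\lambda_n^j\xi_n^j$, $L^\infty$ norm at most $(\lambda_n^j)^{-d/2}R$, and preserved $L^2$ norm. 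The nontriviality of $\phi^j=\wklim(S_n^j)^{-1}f_n$ rules out $\lambda_n^j\to\infty$ (the $L^\infty$ norm would vanish), $\lambda_n^j\to 0$ (the support measure would vanish, forcing $\phi^j=0$ by Cauchy--Schwarz against $L^2$ test functions), and $\lambda_n^j|\xi_n^j|\to\infty$ (the support would escape to infinity). After passing to a subsequence, $\lambda_n^j\to\lambda^j\in(0,\infty)$ and $\xi_n^j\to\xi_*^j\in\R^d$; define $\tilde\phi^j$ by applying the limiting translation--dilation to $\phi^j$. By $L^2$-strong continuity of these operators, $\|S_n^j\phi^j-e^{i(t_n^j,x_n^j)(|\xi|^2,\xi)}\tilde\phi^j\|_2\to 0$, and by Stein--Tomas the $L^{q_2}$ norm of the corresponding extension does the same. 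Absorb the discrepancy into $r_n^J$ and relabel $\tilde\phi^j$ as $\phi^j$; by weak semicontinuity of $L^\infty$ and of supports, the new profile inherits $|\phi^j|\leq R$ and $\supp\phi^j\subseteq\{|\xi|\leq R\}$, leaving a decomposition with pure-modulation symmetries.

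Properties (v) and (i) then follow directly. Property (v) is the reduced weak-$L^2$ convergence, while (i) rephrases the orthogonality $(S_n^j)^{-1}S_n^{j'}\rightharpoonup 0$ in the weak operator topology as $\int e^{i(t_n^{j'}-t_n^j)|\xi|^2+i(x_n^{j'}-x_n^j)\cdot\xi}f\bar g\,d\xi\to 0$ for all $f,g\in L^2$, which by Riemann--Lebesgue applied to the $L^1$ density $f\bar g$ lifted to the paraboloid forces $|t_n^j-t_n^{j'}|+|x_n^j-x_n^{j'}|\to\infty$. For (iii), I would iterate the Br\'ezis--Lieb lemma on the spacetime-translated sequences $\scriptE f_n(\cdot-(t_n^j,x_n^j))$, which converge weakly in $L^{q_2}$ to $\scriptE\phi^j$ and, after passing to a subsequence, a.e.---the latter extracted via the local smoothing bound $\|(|\partial_t|^{1/4}+|\nabla_x|^{1/2})\scriptE g\|_{L^2_{\mathrm{loc}}}\lesssim\|g\|_2$ combined with Rellich--Kondrashov compactness.

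The hard parts are (iv) and (ii). For $q\geq q_2$ (i.e., $p\leq 2$), property (iv) follows by interpolating Theorem~\ref{T:L2 profile}'s $L^{q_2}$-decay of $\scriptE r_n^J$ against the uniform bound $\|\scriptE r_n^J\|_\infty\leq\|r_n^J\|_1\leq|\supp r_n^J|^{1/2}\|r_n^J\|_2\lesssim R^{d+1}$, uniform in $J$ because $\|r_n^J\|_2$ is uniformly controlled by the theorem. To handle (ii) in general, and to handle (iv) when $q<q_2$, I would use a Schr\"odinger-projection argument: for each $j\leq J$ and large $n$, choose a spacetime window $W_n^j$ containing the bulk of the translate $\scriptE\phi^j(\cdot+(t_n^j,x_n^j))$, pairwise disjoint by (i); inverting the spacetime truncation $\chi_{W_n^j}\scriptE f_n$ back to the frequency side yields functions $\tilde\phi_n^j$ approximating $e^{i(t_n^j,x_n^j)(|\xi|^2,\xi)}\phi^j$ in $L^p$. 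The main technical obstacle is a vector-valued boundedness of the map $f\mapsto(\tilde\phi_n^j)_j$ from $L^p$ to $L^p(\ell^{\tilde p})$, with operator norm asymptotic to $1$, which simultaneously yields the $\ell^{\tilde p}$ decoupling in (ii) and forces the complementary part $\chi_{(\bigcup_j W_n^j)^c}\scriptE f_n$ to be small in $L^q$, giving (iv). The exponent $\tilde p=\max\{p,p'\}$ reflects a Hausdorff--Young-type structure of the modulation characters distinguishing the profiles.
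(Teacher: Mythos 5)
You have not proved the statement you were assigned. The statement is Theorem~\ref{T:L2 profile}: the $L^2_\xi$-based profile decomposition for an \emph{arbitrary} bounded sequence in $L^2_\xi$, with general symmetries $S_n^j\in\tilde\scriptS_2$ (dilations and frequency translations included), the Hilbert-space decoupling (ii), and the $L^{q_2}$ decoupling (iii). Your argument instead proves (a version of) Proposition~\ref{P:Lp profile}: you impose the hypotheses $|f_n|<R$ and $\supp f_n\subseteq\{|\xi|<R\}$, which are not hypotheses of Theorem~\ref{T:L2 profile}; you reduce the symmetries to pure spacetime modulations, which is only possible under those extra hypotheses; and your conclusions concern $L^p$, $L^q$, and $\tilde p=\max\{p,p'\}$, none of which appear in the theorem. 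Most importantly, your first step is to ``apply Theorem~\ref{T:L2 profile} as a black box,'' so read as a proof of that theorem the argument is circular. A genuine proof requires ingredients absent from your proposal: a refined Strichartz inequality (the $p=2$ case of Lemma~\ref{L:big bite}, via Tao's bilinear estimate), an inverse/bubble-extraction step producing symmetries along which $f_n$ has a nontrivial weak limit, the Hilbert-space expansion of $\|f_n\|_2^2$ with vanishing cross terms for (ii), and the decoupling of $L^{q_2}$ norms of asymptotically orthogonal bubbles for (iii). (The paper itself does not prove this theorem; it cites B\'egout--Vargas, Carles--Keraani, and Merle--Vega.)

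For what it is worth, the content you did write tracks the paper's proof of Proposition~\ref{P:Lp profile} fairly closely: the reduction of the $L^2$ symmetries to modulations using the size and support bounds, Br\'ezis--Lieb plus local smoothing and Rellich--Kondrashov for the $L^q$ decoupling, and a vector-valued projection bound (the paper's Lemma~\ref{L:Schrodinger projections}, proved by interpolating the trivial $p=1,\infty$ cases against an almost-orthogonality estimate at $p=2$, which is where $\tilde p$ comes from) for the $\ell^{\tilde p}$ near-decoupling of $L^p$ norms. The one substantive wrinkle there is your treatment of the smallness of $\scriptE r_n^J$ in $L^q$: interpolating the $L^{q_2}$ decay against an $L^\infty$ bound only covers $q\geq q_2$, and your fallback for $q<q_2$ via the projection argument is not substantiated; the paper instead runs Br\'ezis--Lieb at a second exponent $q_1$ chosen on the opposite side of $q$ from $q_2$, obtains uniform boundedness of $\|\scriptE r_n^J\|_{q_1}$, and concludes by H\"older between $q_1$ and $q_2$.
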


In proving Proposition~\ref{P:Lp profile}, we may assume that $p \neq 2$.  Let $p_2 := 2$, and choose some $(p_1,q_1)$ at which \eqref{E:extn and rest} holds, such that $p$ lies strictly between $p_1$ and $p_2$.  Set $q_i:=\tfrac{d+2}d p_i$, $i=1,2$.  

The main difficulty is in proving (ii), for which we will use the following technical lemma.   

\begin{lemma} \label{L:Schrodinger projections}
Let $\phi,\psi$ be smooth, compactly supported functions on $\R^d$, with $0 \leq \phi,\psi \leq 1$ and $\phi(0) = \int\psi = 1$.  Let $\{(t_n^j,x_n^j):n \in \N, j \in \N\} \subseteq \R^{1+d}$, with $\lim_{n \to \infty}|(t_n^j-t_n^{j'},x_n^j-x_n^{j'})| = \infty$, for all $j \neq j'$.  For $j \in \N$, define an operator
$$
\pi_n^j f(\xi) := e^{i(t_n^j,x_n^j)(|\xi|^2,\xi)} [\psi(\eta) *_\eta(\phi(\eta)e^{-i(t_n^j,x_n^j)(|\eta|^2,\eta)}f(\eta))],
$$
and for $J \in \N$, define a vector-valued operator $\Pi_n^J := (\pi_n^j)_{j=1}^J$.  Then for each $J$, 
\begin{equation} \label{E:Schrodinger projections}
\limsup_{n \to \infty} \|\Pi_n^J\|_{L^p_\xi \to \ell^{\tilde p}_j(L^p_\xi)} \leq 1,  \qquad \tilde p := \max\{p,p'\}.  
\end{equation}
\end{lemma}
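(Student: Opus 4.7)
My plan is to prove the $L^2$ case via asymptotic operator-norm vanishing of the cross products $\pi_n^j(\pi_n^{j'})^*$ for $j \ne j'$, and then interpolate with trivial $L^1$ and $L^\infty$ endpoint bounds to handle general $p$.

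I would begin by writing $\pi_n^j = U_n^j\, P\, (U_n^j)^{-1}$, where $U_n^j$ denotes multiplication by $e^{i(t_n^j,x_n^j)(|\xi|^2,\xi)}$ (an isometry on every $L^p$) and $Pf := \psi * (\phi f)$. Since $\|\phi\|_\infty = 1$ and $\|\psi\|_1 = 1$, the pointwise bound $|\pi_n^j f| \le |\psi| * (|\phi||f|)$ and Young's inequality give $\|\pi_n^j\|_{L^p \to L^p} \le 1$ for every $p \in [1,\infty]$. Taking the supremum over $j$ immediately yields the endpoint bounds $\|\Pi_n^J\|_{L^1 \to \ell^\infty(L^1)} \le 1$ and $\|\Pi_n^J\|_{L^\infty \to \ell^\infty(L^\infty)} \le 1$.

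The main step is $\limsup_n \|\Pi_n^J\|_{L^2 \to \ell^2(L^2)} \le 1$. By adjoint duality, this is equivalent to the bound $\|\sum_j (\pi_n^j)^* g_j\|_2^2 \le (1+o(1))\sum_j \|g_j\|_2^2$. Expanding the square, the diagonal terms are controlled by $\sum_j \|g_j\|_2^2$ (since each $\pi_n^j$ is a contraction), and the cross terms satisfy $|\langle g_j, \pi_n^j(\pi_n^{j'})^* g_{j'}\rangle| \le \|\pi_n^j(\pi_n^{j'})^*\|_{L^2 \to L^2}\|g_j\|_2\|g_{j'}\|_2$. I would show $\|\pi_n^j(\pi_n^{j'})^*\|_{L^2 \to L^2} \to 0$ for $j \ne j'$ by computing the Schwartz kernel, which after extracting a unimodular phase depending only on $(\xi,\xi')$ reduces to the oscillatory integral
\[
I_n(\xi,\xi') = \int \phi(\eta)^2\, \psi(\xi - \eta)\psi(\xi' - \eta)\, e^{-i(s_n|\eta|^2 + y_n \cdot \eta)}\, d\eta,
\]
with $(s_n, y_n) := (t_n^j - t_n^{j'}, x_n^j - x_n^{j'})$ satisfying $|(s_n, y_n)| \to \infty$. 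The amplitude is smooth and compactly supported in $\eta$ (uniformly in $(\xi,\xi')$ in a fixed compact set), so after passing to a subsequence to separate the cases $|s_n| \to \infty$ (stationary phase gives $|s_n|^{-d/2}$ decay from the Hessian $-2 s_n I$) and $s_n$ bounded with $|y_n| \to \infty$ (non-stationary phase since $|2s_n\eta + y_n| \ge |y_n|/2$ on the amplitude support for $n$ large), one obtains $I_n \to 0$ uniformly on the compact support. The Hilbert--Schmidt bound then gives $\|\pi_n^j(\pi_n^{j'})^*\|_{L^2 \to L^2} \to 0$, and since $J$ is finite, the cross-term contribution to the expansion above is $o(1)\sum_j \|g_j\|_2^2$.

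Finally, I would invoke vector-valued Riesz--Thorin interpolation between the endpoint bounds. For $p \in (1,2)$, interpolating $L^1 \to \ell^\infty(L^1)$ with $L^2 \to \ell^2(L^2)$ at the parameter $\theta = 2/p'$ gives target $\ell^{p'}(L^p) = \ell^{\tilde p}(L^p)$; for $p \in (2, \infty)$, interpolating $L^2 \to \ell^2(L^2)$ with $L^\infty \to \ell^\infty(L^\infty)$ at $\theta = 1 - 2/p$ gives $\ell^p(L^p) = \ell^{\tilde p}(L^p)$. In each case the interpolated norm is controlled by $\|\Pi_n^J\|_{L^2\to\ell^2(L^2)}^\theta$, so taking $\limsup_n$ yields the claim. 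The main obstacle is the phase analysis for $I_n$, which requires the subsequence-splitting outlined above; once that oscillatory integral estimate is in hand, the rest is bookkeeping with standard interpolation.
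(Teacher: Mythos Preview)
Your proposal is correct and follows essentially the same architecture as the paper's proof: elementary contractivity at $p=1,\infty$, complex interpolation to reduce to $p=2$, duality plus expansion of $\|(\Pi_n^J)^*\vec g\|_2^2$, and stationary/non-stationary phase to show $\|\pi_n^j(\pi_n^{j'})^*\|_{L^2\to L^2}\to 0$ for the off-diagonal terms. The only cosmetic differences are that the paper wraps the oscillatory decay into a single $(1+|(t_n,x_n)|)^{-d/2}$ bound rather than splitting into cases, and bounds the operator via an $L^\infty$ estimate on $T_n g$ instead of via the Hilbert--Schmidt norm of the kernel.
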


\begin{proof}[Proof of Lemma~\ref{L:Schrodinger projections}]
For each $j$, $n$, and $1 \leq p \leq \infty$, $\pi_n^j$ is a bounded operator on $L^p_\xi$ with norm at most 1, so \eqref{E:Schrodinger projections} is elementary for $p=1,\infty$.  By (complex) interpolation, this leaves us to prove the inequality in the case $p=2$.  By duality, it suffices to prove that $\limsup_{n \to \infty} \|(\Pi_n^J)^*\|_{\ell^2_j L^2_\xi \to L^2_\xi} \leq 1$.  We write
$$
\|(\Pi_n^J)^* \vec f\|_{L^2_\xi}^2 = \sum_j \int |(\pi_n^j)^* f_j|^2 + \sum_{j \neq j'} \int (\pi_n^{j'})^* f_{j'}\overline{(\pi_n^j)^* f_j}\, d\xi.
$$
It is elementary to bound the first term by $\sum_j \|f_j\|_2^2$, so it remains to prove that $\|\pi_n^{j'}(\pi_n^j)^*\|_{L^2_\xi \to L^2_\xi} \to 0$.  Abusing notation slightly, it thus suffices to prove that the sequence $(T_n)$, defined by
$$
T_n g := \psi*_\zeta (e^{i(t_n,x_n)(|\zeta|^2,\zeta)} \phi(\zeta) \psi*g(\zeta)), \qquad g \in L^2_\xi,
$$
tends to zero in $\scriptL(L^2,L^2)$, whenever $|t_n|+|x_n| \to \infty$.  By the support condition on $\phi, \psi$ and stationary phase,
$$
\|T_n g\|_2 \lesssim \|T_n g\|_\infty \lesssim (1+|(t_n,x_n)|)^{-\frac d2} \|\psi*g\|_{C^2} \lesssim (1+|(t_n,x_n)|)^{-\frac d2}\|g\|_2,
$$
whence $\|T_n\|_{2 \to 2} \lesssim (1+|(t_n,x_n)|)^{-\frac d2} \to 0$.  
\end{proof}

\begin{proof}[Proof of Proposition~\ref{P:Lp profile}]
As the sequence $(f_n)$ is bounded in $L^2$ (albeit with an $R$-dependent bound), we may apply the profile decomposition in Theorem~\ref{T:L2 profile}.  Each symmetry $S_n^j$ arising therein may be written as a composition of a dilation with parameter $\lambda_n^j$, a frequency translation with parameter $\xi_n^j$, and a spacetime translation with parameter $(t_n^j,x_n^j)$.  By the size and support conditions on the $f_n$, as well as the definition of the $\phi^j$ and their nontriviality, the dilation parameters are bounded away from $0$ and $\infty$, and the frequency parameters are bounded.  Thus, after passing to a subsequence, for each $j$ the dilations and frequency translations converge in the strong operator topology.  Putting the limit on the $\phi^j$ if needed, we may assume that
$$
S_n^j \phi^j = e^{i(t_n^j,x_n^j)(|\xi|^2,\xi)}\phi^j.
$$
Conclusions (i) and (v) follow.

Conclusion (iii) follows from local smoothing and the Br\'ezis--Lieb inequality as in the proof of Proposition~\ref{P:freq localization}.  Of course, $f_n$ is bounded in $L^{p_1}_\xi$, and Br\'ezis--Lieb also yields (iii) with $q$ replaced by $q_1$.  Thus, after passing to a subsequence, $\|\scriptE r_n^J\|_{q_1}$ is uniformly bounded for all $n$ and $J$.  We already know that 
$$\lim_{J \to \infty} \limsup_{n \to \infty} \|\scriptE r_n^J\|_{q_2} = 0,$$
 so (iv) follows from H\"older's inequality.  

This leaves us to prove (ii).  Fix $J \in \N$ with $J \leq J_0$.  Choose smooth, compactly supported $\psi,\phi$ with $0 \leq \phi,\psi \leq 1$ and  $\phi(0) = \int \psi = 1$ and $\|\psi*(\phi \phi^j) - \phi^j\|_p < \eps$.  We claim that 
$$
\lim_n \|\pi_n^j f_n - e^{i(t_n^j,x_n^j)(|\xi|^2,\xi)} \psi * (\phi \phi^j)\|_p = 0,
$$
where $\pi_n^j$ is defined as in Lemma~\ref{L:Schrodinger projections}.  To this end, it suffices to prove that for all $1 \leq j \neq j' \leq J$,
\begin{equation} \label{E:orthog projections}
\lim_{n \to \infty} \|\pi_n^j(e^{i(t_n^{j'},x_n^{j'})(|\xi|^2,\xi)}\phi^{j'})\|_p = 0 \qtq{and} \lim_{n \to \infty} \|\pi_n^j r_n^j\|_p = 0.
\end{equation}
By (v), the claimed limits amount to proving that $\lim \|\psi*(\phi g_n)\|_p = 0$, whenever $(g_n)$ is a sequence in $L^p$ converging weakly to zero.  This is an immediate consequence of the Dominated Convergence Theorem and the compact support of $\phi,\psi$.  We send $\eps \searrow 0$, and (ii) follows from Lemma~\ref{L:Schrodinger projections}.  
\end{proof}


\section{Proof of Theorem~\ref{T:main}} \label{S:prove profile}


Let $(f_n)$ be an $L^p$-normalized extremizing sequence.  By Proposition~\ref{P:freq localization}, after applying a symmetry, 
$$
\|\scriptE f_n^R\|_q \geq A_p - \eps(n,R),
$$
where $f_n^R := f \chi_{(\{|\xi| < R\} \cup \{|f| < R\})}$, and $\lim_{R \to \infty} \limsup_{n \to \infty} \eps(n,R) = 0$.  We consider the integer truncations, $f_n^R$ with $R=m \in \N$.  By Proposition~\ref{P:Lp profile}, after passing to a subsequence in $n$ (which is independent of $m$), we may decompose 
$$
f_n^m:= \sum_{j=1}^J e^{i(t_n^{m,j},x_n^{m,j})(|\xi|^2,\xi)}\phi^{m,j} + r_n^{m,j}, \qquad 1 \leq J < J_0 \in \N \cup \{\infty\},
$$
where the decomposition on the right satisfies the conclusions of that proposition.  

By conclusions (iii) and (iv), then (ii) of Proposition~\ref{P:Lp profile}, and $q > \tilde p$,
\begin{align*}
&A_p^q - o_m(1) \leq \limsup_{n \to \infty} \|\scriptE f_n^m\|_q^q = 
\sum_{j=1}^{J_0}\|\scriptE \phi^{m,j}\|_q^q 
\leq A_p^{\tilde p} \max_{j\leq J_0} \|\scriptE \phi^{m,j}\|_q^{q-\tilde p} \sum_{j=1}^{J_0}\|\phi^{m,j}\|_p^{\tilde p} \\
&\qquad \leq A_p^{\tilde p} \max_{j\leq J_0} \|\scriptE \phi^{m,j}\|_q^{q-\tilde p} \leq A_p^q\max_{j\leq J_0} \|\phi^{m,j}\|_p^{q-\tilde p}.
\end{align*}
Choose $j=j_m$ to maximize $\|\scriptE \phi^{m,j}\|_q$, and set 
$$
\phi^m := \phi^{m,j_m}, \qquad (t_n^m,x_n^m):=(t_n^{m,j_m},x_n^{m,j_m}).
$$
Then
\begin{equation} \label{E:phi, Ephi big}
1-o_m(1) \leq \|\phi^{m}\|_p \leq 1, \qtq{and} A_p-o_m(1) \leq \|\scriptE \phi^{m}\|_p.
\end{equation}
Since 
\begin{gather}\notag
e^{-i(t_n^m,x_n^m)(|\xi|^2,\xi)} f_n^m \rightharpoonup \phi^m, \qtq{weakly in $L^p_\xi$,}\\\label{E:phim big}
\qtq{and} \|\phi^m\|_p \geq (1-o_m(1)) \limsup_{n \to \infty} \|f_n^m\|_p, \qtq{as $m \to \infty$,}
\end{gather}
strict convexity of $L^p$ implies
$$
\limsup_{n \to \infty} \|f_n^m - e^{i(t_n^m,x_n^m)(|\xi|^2,\xi)} \phi^m\|_p = o_m(1), \: \text{as $m \to \infty$}.
$$
(See Theorem 2.5 and the proof of Theorem 2.11 in \cite{LiebLoss}.)  

By Proposition~\ref{P:freq localization} and the triangle inequality,
$$
\limsup_{n \to \infty} \|f_n-e^{i(t_n^m,x_n^m)(|\xi|^2,\xi)}\phi^m\|_p = o_m(1),\: \text{as $m \to \infty$},
$$
whence
$$
\limsup_{n \to \infty} \|e^{i(t_n^{m'},x_n^{m'})(|\xi|^2,\xi)}\phi^{m'} - e^{i(t_n^m,x_n^m)(|\xi|^2,\xi)}\phi^m\|_p =  o_{\min\{m,m'\}}(1).
$$
Applying the projection $\pi_n^m$ and using \eqref{E:orthog projections} and \eqref{E:phim big}, for sufficiently large $m,m'$, $|(t_n^m-t_n^{m'},x_n^m-x_n^{m'})|$ remains bounded as $n \to \infty$.  Applying a spacetime modulation to $f_n$, we may assume that $(t_n^M,x_n^M) \equiv 0$, for some fixed, sufficiently large $M$.  Passing to a subsequence, we may thus assume that $(t_n^m,x_n^m) \to (t^m,x^m)$ for all $m \geq M$, whence, replacing $\phi^m$ with $e^{i(t^m,x^m)(|\xi|^2,\xi)}\phi^m$, we may assume that $(t^m_n,x^m_n) \to 0$ for all $m \geq M$.  Thus $f \mapsto e^{i(t^m_n,x^m_n)(|\xi|^2,\xi)}f$ converges to the identity in the strong operator topology on ${\rm{Aut}}(L^p_\xi)$, for all $m$.  In summary, we knew that
$$
\limsup_{n \to \infty} \|f_n - e^{i(t_n^m,x_n^m)(|\xi|^2,\xi)}\phi^m\|_p = o_m(1), \qtq{as} m \to \infty;
$$
we now know that
\begin{equation} \label{E:fn to phim}
\limsup_{n \to \infty} \|f_n-\phi^m\|_p = o_m(1), \qtq{as} m\to\infty.
\end{equation}
By \eqref{E:fn to phim} and the triangle inequality, $\phi^m$ is Cauchy, hence convergent, in $L^p_\xi$ as $m \to \infty$, and $\{f_n\}$ converges to the limit, which is an extremizer, as $n \to \infty$.  

\section{Proof of the corollary:  Extremizers for the restriction operator}\label{S:corollary}

If $\{g_n\}$ is an $L^{q'}_{t,x}$-normalized extremizing sequence for the restriction operator $\scriptR$, by duality, $f_n:=|\scriptR g_n|^{p'-2}\scriptR g_n \in L^p_\xi$ is extremizing for $\scriptE$, with $\|f_n\|_p \to A_p^{p'-1}$.  By Theorem~\ref{T:main}, after passing to a subsequence, there exist extension symmetries $S_n \in \tilde\scriptS_p$ such that $S_nf_n$ converges in $L^p_\xi$ to an extension extremizer $f$.  As $S_n f_n = |\scriptR T_n g_n|^{p'-2} \scriptR T_n g_n$, for a corresponding sequence $\{T_n\}$ of restriction symmetries, we may assume, replacing $g_n$ with $T_n g_n$, that $f_n \to f$ in $L^p_\xi$.  Passing to a subsequence, $\{g_n\}$, being bounded, has a weak limit:  $g_n \rightharpoonup g$ in $L^{q'}_{t,x}$.  We claim that $g$ is a restriction extremizer and that this weak convergence is in fact strong.  Indeed,
\begin{align*}
A_p^{p'}\|g\|_{q'} = A_p \|g\|_{q'}\|f\|_p \geq |\langle g, \scriptE f \rangle| = \lim|\langle g_n,\scriptE f_n \rangle| = \lim\|\scriptR g\|_{p'}^{p'} = A_p^{p'}.
\end{align*}
By Theorem 2.11 of \cite{LiebLoss}, weak convergence combined with convergence of norms implies strong convergence, $g_n \to g$ in $L^{q'}_{t,x}$.  By continuity of $\scriptR$, it follows that $g$ is a restriction extremizer.



\end{document}